\numberwithin{equation}{section}
\numberwithin{figure}{section}
\theoremstyle{plain}
\newtheorem{theorem}{Theorem}[section]
\newtheorem{proposition}[theorem]{Proposition}
\theoremstyle{definition}
\newtheorem{example}[theorem]{Example}
\theoremstyle{remark}
\newtheorem{remark}[theorem]{Remark}
\theoremstyle{plain}
\newtheorem{corollary}[theorem]{Corollary}
\begin{document}

\title{Growth and Integrability of Fourier Transforms on Euclidean Space}

\author{William O. Bray}

\address{Department of Mathematics, Missouri State University, Springfield,
MO 65837}

\email{wbray@missouristate.edu}

\subjclass[2000]{42B10, 42B15}

\keywords{Fourier transform, multiplier, modulus of continuity, Hausdorff-Young
inequality, Pick's inequality}
\begin{abstract}
Abstract. A fundamental theme in classical Fourier analysis relates smoothness properties of functions 
to the growth and/or integrability of their Fourier transform. By using a suitable class of $L^{p}-$
 multipliers, a rather general inequality controlling the size of Fourier transforms for large and small 
 argument is proved. As consequences, quantitative Riemann-Lebesgue estimates are obtained 
 and an integrability result for the Fourier transform is developed extending ideas used by Titchmarsh 
 in the one dimensional setting.
 \end{abstract}
\maketitle

\section{Introduction}

A classical theme in Fourier analysis relates smoothness of functions
to the growth and/or integrability of their Fourier components. In
this vein, the following inequality was proved in \cite{BP1}. Here
the one dimensional Fourier transform is defined for integrable functions
as
\[
\widehat{f}(\lambda)=\int_{\mathbb{R}}f(x)\, e^{-i\lambda x}dx,
\]
and extended to $L^{p}(\mathbb{R})$ in the usual fashion; 
\[
\Omega_{p}[f](t)=\sup_{0<h<t}\Vert f(\cdot+h)+f(\cdot-h)-2f(\cdot)\Vert_{p}
\]
is an $L^{p}-$modulus of continuity based on symmetric differences;
and $p'$ denotes the H\={o}lder conjugate index ($\frac{1}{p}+\frac{1}{p'}=1$).
\begin{theorem}
\label{thm:BP-1d} Let $1\leq p\leq2$. Then there is a constant $c_{p}>0$
such that for all $f\in L^{p}(\mathbb{R})$,
\begin{itemize}
\item when $p=1,$ 
\[
\sup_{\lambda}\left[\min\{1,(\lambda t)^{2}\}\vert\widehat{f}(\lambda)\vert\right]\leq c_{1}\Omega_{1}[f](t);
\]

\item when $1<p\leq2$, 
\[
\left[\int_{\mathbb{R}}\min\{1,(\lambda t)^{2p'}\}\vert\widehat{f}(\lambda)\vert^{p'}d\lambda\right]^{1/p'}\leq c_{p}\Omega_{p}[f](t).
\]

\end{itemize}
\end{theorem}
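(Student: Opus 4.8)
The plan is to move to the Fourier side and exploit the fact that symmetric differencing is a Fourier multiplier operator. Set $g_{h}(x)=f(x+h)+f(x-h)-2f(x)$. Since translation becomes modulation under the Fourier transform,
\[
\widehat{g_{h}}(\lambda)=\bigl(e^{i\lambda h}+e^{-i\lambda h}-2\bigr)\widehat{f}(\lambda)=-4\sin^{2}(\lambda h/2)\,\widehat{f}(\lambda),
\]
first for $f\in L^{1}\cap L^{p}$ by a direct computation and then for all $f\in L^{p}(\mathbb{R})$ by density, since $f\mapsto g_{h}$ is bounded on $L^{p}$ and the multiplier $\lambda\mapsto -4\sin^{2}(\lambda h/2)$ is bounded. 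Because translation is an isometry of $L^{p}$, the very definition of the modulus of continuity gives $\Vert g_{h}\Vert_{p}\le\Omega_{p}[f](t)$ for every $h$ with $0<h<t$. The whole proof then consists of combining this with an appropriate size estimate on $\widehat{g_{h}}$ and then selecting $h$ (or averaging in $h$) so that the trigonometric weight $\sin^{2}(\lambda h/2)$ reproduces the factor $\min\{1,(\lambda t)^{2}\}$, respectively its $p'$-th power.

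For $p=1$ I would use only the elementary bound $\Vert\widehat{g_{h}}\Vert_{\infty}\le\Vert g_{h}\Vert_{1}$, which yields $4\sin^{2}(\lambda h/2)\,\vert\widehat{f}(\lambda)\vert\le\Omega_{1}[f](t)$ for all $\lambda$ and all $0<h<t$. Taking the supremum over $h\in(0,t)$, it remains to verify the elementary inequality
\[
\sup_{0<h<t}\sin^{2}(\lambda h/2)\ \ge\ \frac{1}{\pi^{2}}\,\min\{1,(\lambda t)^{2}\},
\]
which is immediate by distinguishing the cases $\vert\lambda\vert t\ge\pi$ (the supremum equals $1$) and $\vert\lambda\vert t<\pi$ (the supremum equals $\sin^{2}(\lambda t/2)\ge(\lambda t)^{2}/\pi^{2}$, by Jordan's inequality $\vert\sin u\vert\ge\frac{2}{\pi}\vert u\vert$ on $[0,\pi/2]$). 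Taking the supremum over $\lambda$ gives the first assertion with $c_{1}=\pi^{2}/4$.

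For $1<p\le2$ I would replace the $L^{1}\!\to\!L^{\infty}$ estimate by the Hausdorff--Young inequality, $\Vert\widehat{g_{h}}\Vert_{p'}\le\Vert g_{h}\Vert_{p}\le\Omega_{p}[f](t)$, that is,
\[
4^{p'}\int_{\mathbb{R}}\sin^{2p'}(\lambda h/2)\,\vert\widehat{f}(\lambda)\vert^{p'}\,d\lambda\ \le\ \Omega_{p}[f](t)^{p'},\qquad 0<h<t.
\]
Here a single choice of $h$ does not suffice, since the weight vanishes for many $\lambda$; instead I would integrate this inequality in $h$ over $(0,t)$ and apply Tonelli's theorem, thereby reducing matters to the scalar lower bound
\[
\int_{0}^{t}\sin^{2p'}(\lambda h/2)\,dh\ \ge\ C_{p'}\,t\,\min\{1,(\lambda t)^{2p'}\}\qquad(\lambda\in\mathbb{R},\ t>0)
\]
for some constant $C_{p'}>0$. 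Inserting this back and cancelling the factor $t$ gives $\int_{\mathbb{R}}\min\{1,(\lambda t)^{2p'}\}\,\vert\widehat{f}(\lambda)\vert^{p'}\,d\lambda\le(4^{p'}C_{p'})^{-1}\Omega_{p}[f](t)^{p'}$, which is the second assertion with $c_{p}=\tfrac14 C_{p'}^{-1/p'}$.

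The main work, and the only step requiring genuine care, is the last displayed inequality. After the substitution $u=\lambda h/2$ it reduces to a statement about $\int_{0}^{X}\sin^{2p'}u\,du$ with $X=\vert\lambda\vert t/2$: for small $X$ one bounds the integrand below by $(\tfrac{2}{\pi})^{2p'}u^{2p'}$ and obtains a multiple of $X^{2p'+1}$, hence of $(\lambda t)^{2p'}$; for large $X$ one uses that $\sin^{2p'}$ has a strictly positive average over each half-period, so that the integral grows at least linearly in $X$ and contributes a multiple of $t$. Matching the two regimes near $X\asymp1$ produces the uniform constant $C_{p'}$. Everything else in the argument is soft.
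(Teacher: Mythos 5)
Your proposal is correct and follows essentially the same route as the paper's general scheme (Theorem \ref{thm:gen-estimates} and its source \cite{BP1}): pass to the Fourier side, apply the $L^{1}\to L^{\infty}$ bound or the Hausdorff--Young inequality to the second difference, and convert the multiplier $4\sin^{2}(\lambda h/2)$ into $\min\{1,(\lambda t)^{2}\}$ via a lower bound. The one genuinely one-dimensional point --- that a single scale $h$ cannot work because $\sin^{2}(\lambda h/2)$ vanishes on a lattice, so one must take the supremum (for $p=1$) or average in $h$ over $(0,t)$ (for $1<p\le2$) before invoking the elementary lower bound $\int_{0}^{t}\sin^{2p'}(\lambda h/2)\,dh\ge C_{p'}\,t\min\{1,(\lambda t)^{2p'}\}$ --- is exactly the device used in \cite{BP1}, and you execute it correctly.
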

The significance of this inequality stems from the presence of the
minimum function that gives control over the Fourier transform for
small and large $\lambda$. Indeed, for $1<p\leq2$, the inequality
may be rewritten
\[
\underset{\text{{large}\,\ensuremath{\lambda}}}{\underbrace{\int_{|\lambda|\geq1/t}\vert\widehat{f}(\lambda)\vert^{p'}d\lambda}\;+\;}\underset{\text{small}\,\lambda}{\underbrace{t^{2p'}\int_{|\lambda|<1/t}\lambda^{2p'}\vert\widehat{f}(\lambda)\vert^{p'}d\lambda}}\leq c_{p}^{p'}\Omega_{p}^{p'}[f](t).
\]
As shown in \cite{BP1}, the estimate for large $\lambda$ yields
a qualitative Riemann-Lebesgue lemma. On the other hand, from the
estimate for small $\lambda$, the following integrability theorem
can be proved.
\begin{proposition}
\label{prop:B-1d-int}Let $1\leq p\leq2$ and $f\in L^{p}(\mathbb{R})$.
If for some $0<\alpha\leq2$ 
\begin{equation}
\Vert f(\cdot+t)+f(\cdot-t)-2f(\cdot)\Vert_{p}=O(t^{\alpha}),\label{eq:lip-1d}
\end{equation}
then $\widehat{f}\in L^{\beta}(\mathbb{R})$ provided 
\[
\frac{p}{p+\alpha p-1}<\beta\leq p'.
\]
In particular, if (\ref{eq:lip-1d}) holds for $\alpha>1/p$, then
$\widehat{f}\in L^{1}(\mathbb{R})$ and Fourier inversion holds a.e.%
\footnote{Fourier inversion actually holds everywhere. The Fourier inversion
integral defines a continuous function which is equal to $f$ a.e.
This remark also applies to Corollaries \ref{cor:ae FT}, \ref{cor:szasz},
and \ref{cor:ae FS}.%
}
\end{proposition}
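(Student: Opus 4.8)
The plan is to feed the regularity hypothesis into Theorem~\ref{thm:BP-1d} and then run a dyadic decomposition of $\widehat{f}$ in the frequency variable. First note that since $t\mapsto t^{\alpha}$ is increasing, the hypothesis (\ref{eq:lip-1d}) immediately upgrades to $\Omega_{p}[f](t)=O(t^{\alpha})$ as $t\to0^{+}$, so Theorem~\ref{thm:BP-1d} applies with its right-hand side replaced by a constant times $t^{\alpha}$. I would treat $1<p\le2$ and $p=1$ separately.

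For $1<p\le2$, restrict the integral in the second part of Theorem~\ref{thm:BP-1d} to the set $\{|\lambda|<1/t\}$, where $\min\{1,(\lambda t)^{2p'}\}=(\lambda t)^{2p'}$; this gives, for all small $t>0$,
\[
\int_{|\lambda|<1/t}|\lambda|^{2p'}\,|\widehat{f}(\lambda)|^{p'}\,d\lambda\le c_{p}^{p'}\,t^{-2p'}\,\Omega_{p}^{p'}[f](t)\lesssim t^{(\alpha-2)p'}.
\]
Specializing to $t=1/(2R)$ with $R\ge1$ and using $|\lambda|^{2p'}\ge R^{2p'}$ on the annulus $\{R\le|\lambda|<2R\}\subset\{|\lambda|<1/t\}$, one obtains the uniform tail bound $\int_{R\le|\lambda|<2R}|\widehat{f}|^{p'}\lesssim R^{-\alpha p'}$: this is exactly the ``small~$\lambda$'' estimate doing its work, converted into decay of $\widehat{f}$ at every frequency scale. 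Now split $\int_{\mathbb{R}}|\widehat{f}|^{\beta}=\int_{|\lambda|<1}|\widehat{f}|^{\beta}+\sum_{j\ge0}\int_{A_{j}}|\widehat{f}|^{\beta}$ with $A_{j}=\{2^{j}\le|\lambda|<2^{j+1}\}$. The bounded piece is finite for every $\beta\le p'$ by the Hausdorff-Young inequality ($\widehat{f}\in L^{p'}$) together with H\"older's inequality on a set of finite measure. On $A_{j}$, H\"older with exponent $p'/\beta$ and the tail bound give $\int_{A_{j}}|\widehat{f}|^{\beta}\lesssim|A_{j}|^{1-\beta/p'}\bigl(\int_{A_{j}}|\widehat{f}|^{p'}\bigr)^{\beta/p'}\lesssim 2^{j(1-\beta/p')}\,2^{-j\alpha\beta}$, and the geometric series in $j$ converges precisely when $1-\beta/p'-\alpha\beta<0$, i.e. $\beta>p'/(1+\alpha p')=p/(p+\alpha p-1)$.

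For $p=1$ the argument is shorter: the first part of Theorem~\ref{thm:BP-1d} with $t=1/|\lambda|$ yields $|\widehat{f}(\lambda)|\lesssim|\lambda|^{-\alpha}$ for $|\lambda|\ge1$, while $|\widehat{f}(\lambda)|\le\Vert f\Vert_{1}$ for all $\lambda$; hence $\int_{\mathbb{R}}|\widehat{f}|^{\beta}\lesssim1+\int_{|\lambda|\ge1}|\lambda|^{-\alpha\beta}\,d\lambda$, which is finite iff $\alpha\beta>1$, again the condition $\beta>p/(p+\alpha p-1)$ (here $p'=\infty$, so there is no upper constraint). For the last assertion, observe that $\alpha>1/p$ is equivalent to $p/(p+\alpha p-1)<1$, so the admissible range of $\beta$ contains $1$ and therefore $\widehat{f}\in L^{1}(\mathbb{R})$; the a.e.\ inversion formula then follows from the standard fact that $f\in L^{p}$ with $\widehat{f}\in L^{1}$ forces $f(x)=\frac{1}{2\pi}\int_{\mathbb{R}}\widehat{f}(\lambda)\,e^{i\lambda x}\,d\lambda$ for a.e.\ $x$ (in fact everywhere after adjusting $f$ on a null set).

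I expect the only real content to be the middle step: converting the single-scale weighted estimate coming from Theorem~\ref{thm:BP-1d} into uniform annular decay of $\widehat{f}$ in $L^{p'}$, and then bookkeeping the geometric series so as to land exactly on the exponent $p/(p+\alpha p-1)$. One point deserves care --- the low-frequency region $\{|\lambda|<1\}$ must be handled by Hausdorff-Young directly, since passing it through the ``min'' estimate is lossy and would manufacture a spurious divergence; the genuine obstruction to integrability of $\widehat{f}$ always lies at infinity. The endpoint $\beta=p'$ and the degenerate case $p=1$ (where $p'=\infty$) also need a separate line, but both are immediate.
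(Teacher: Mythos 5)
Your argument is correct, and its overall strategy is the one the paper uses: the paper proves this proposition as the $n=1$, $\sigma=1$ instance of Proposition \ref{prop:integrab result}, whose engine is precisely the ``small $\lambda$'' half of the weighted Hausdorff--Young estimate of Theorem \ref{thm:BP-1d} evaluated at scale $t\sim 1/\Lambda$, followed by H\"older's inequality to pass from exponent $p'$ down to $\beta$. Where you genuinely diverge is in how the weight $|\lambda|^{2}$ is stripped off. The paper keeps the frequency variable continuous: it forms $\phi(\Lambda)=\int_{1}^{\Lambda}\left[\lambda^{2}F(\lambda)\right]^{\beta}d\lambda$, bounds $\phi$ by one application of H\"older over all of $[1,\Lambda]$, and then removes the factor $\lambda^{-2\beta}$ by integration by parts (a continuous Abel summation). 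You discretize instead: the single-scale inequality at $t=1/(2R)$ is first converted into the uniform annular decay $\int_{R\le|\lambda|<2R}|\widehat f|^{p'}=O(R^{-\alpha p'})$, after which the weight is constant up to a factor on each dyadic block and only a geometric series is needed. Both routes land on the identical exponent condition $1-\beta/p'-\alpha\beta<0$, i.e.\ $\beta>p/(p+\alpha p-1)$. Your version is slightly more elementary and makes the mechanism transparent (each dyadic block of $\widehat f$ decays like $2^{-j\alpha}$ in $L^{p'}$); the paper's version is phrased so that it transfers verbatim to the radial profile $F(\lambda)$ against the measure $\lambda^{n-1}d\lambda$ in higher dimensions, and its discrete analogue reappears in the summation-by-parts proof of Corollary \ref{cor:ae FS}. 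Your explicit handling of the region $|\lambda|<1$ by Hausdorff--Young alone, and of $p=1$ via the sup-norm estimate, supplies details the general proof leaves implicit.
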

The proof of this follows from techniques used later for its generalization
to higher dimensions. This result is an extension of one given in
Titchmarsh \cite[Theorem 84]{T} where first order differences were
used instead of the second order difference in (\ref{eq:lip-1d})
and consequently the restrictions are $0<\alpha\leq1$ and $1<p\leq2$.

Theorem \ref{thm:BP-1d} has an extension to higher dimensions \cite{BP1,BP2}
stated below; the modulus of continuity now is based on the the spherical
mean operator defined by
\[
M^{t}f(x)=\frac{1}{\omega_{n-1}}\int_{S^{n-1}}f(x+t\omega)d\omega,
\]
where $S^{n-1}$ is the unit sphere in $\mathbb{R}^{n}$, $\omega_{n-1}$
is its measure, and $d\omega$ is induced Lebesgue measure.
\begin{theorem}
\label{BP-thm}Let $n\geq2$, let $1\leq p\leq2$. Then there is a
constant $c_{p}>0$ such that for all $f\in L^{p}(\mathbb{R}^{n})$,
\begin{itemize}
\item when $p=1$, 
\[
\sup_{\xi}\left[\min\{1,(t|\xi|)^{2}\}\vert\widehat{f}(\xi)\vert\right]\leq c_{1}\Vert M^{t}f(\cdot)-f(\cdot)\Vert_{1};
\]

\item when $1<p<2$,
\[
\left[\int_{\mathbb{R}^{n}}\min\{1,(t|\xi|)^{2p'}\}\vert\widehat{f}(\xi)\vert^{p'}d\xi\right]^{1/p'}\leq c_{p}\Vert M^{t}f(\cdot)-f(\cdot)\Vert_{p};
\]

\item when $p=2$,
\[
\left[\int_{\mathbb{R}^{n}}\min\{1,(t|\xi|)^{4}\}\vert\widehat{f}(\xi)\vert^{2}d\xi\right]^{1/2}\asymp\Vert M^{t}f(\cdot)-f(\cdot)\Vert_{2}.
\]
(Here $\asymp$ means the left hand side is bounded above and below
by a positive constant times the right hand side.)
\end{itemize}
\end{theorem}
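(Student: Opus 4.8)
The plan is to reduce all three assertions to a single elementary pointwise bound for the multiplier attached to the spherical mean operator, after which the cases $1\le p<2$ follow from the Hausdorff--Young inequality and the case $p=2$ from the Plancherel theorem. By rotation invariance $M^{t}$ is a Fourier multiplier: for $f$ in the Schwartz class (and hence for $f\in L^{p}$, $1\le p\le 2$, by density, using $\|M^{t}f\|_{p}\le\|f\|_{p}$ from Minkowski's integral inequality),
\[
\widehat{M^{t}f}(\xi)=\varphi(t|\xi|)\,\widehat{f}(\xi),\qquad \varphi(s):=\frac{1}{\omega_{n-1}}\int_{S^{n-1}}e^{is\,\omega_{1}}\,d\omega ,
\]
so that $\widehat{M^{t}f-f}(\xi)=\bigl(\varphi(t|\xi|)-1\bigr)\widehat{f}(\xi)$. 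Here $\varphi$ is real, even, smooth (indeed entire), satisfies $\varphi(0)=1$, and is essentially $s^{-(n/2-1)}J_{n/2-1}(s)$; integrating out all but the first coordinate gives the slice representation $\varphi(s)=c_{n}\int_{-1}^{1}\cos(su)\,(1-u^{2})^{(n-3)/2}\,du$, with $c_{n}$ fixed by $\varphi(0)=1$ (this is where $n\ge 2$ enters, so that the weight is integrable).

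The heart of the matter is the two-sided estimate
\[
|\varphi(s)-1|\asymp\min\{1,s^{2}\},\qquad s\ge 0 .
\]
For the upper bound I would combine $|\varphi(s)|\le\varphi(0)=1$, which gives $|\varphi(s)-1|\le 2$ for $s\ge 1$, with the Taylor expansion $\varphi(s)=1-\tfrac{1}{2n}s^{2}+O(s^{4})$ near the origin (using $\tfrac{1}{\omega_{n-1}}\int_{S^{n-1}}\omega_{1}^{2}\,d\omega=\tfrac{1}{n}$), which gives $|\varphi(s)-1|\lesssim s^{2}$ for $s\le 1$. For the lower bound, the same expansion gives $1-\varphi(s)=\tfrac{1}{2n}s^{2}+O(s^{4})$, which is $\asymp s^{2}$ as $s\to 0^{+}$; since $(1-\varphi(s))/\min\{1,s^{2}\}$ is continuous and strictly positive on $(0,\infty)$ with positive limit $\tfrac{1}{2n}$ at the origin, it is bounded below by a positive constant as soon as one knows $\sup_{s\ge 1}\varphi(s)<1$, which forces its liminf at infinity to be positive as well. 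Establishing $\sup_{s\ge 1}\varphi(s)<1$ is the one genuinely non-formal step: from the slice representation, $\cos(su)<1$ for almost every $u$ when $s\ne 0$, hence $\varphi(s)<1$ for every $s>0$; combining this with $\varphi(s)\to 0$ as $s\to\infty$ (Riemann--Lebesgue applied to the $L^{1}(-1,1)$ weight $(1-u^{2})^{(n-3)/2}$, or the classical bound $J_{\nu}(s)=O(s^{-1/2})$) and compactness of a finite interval $[1,R]$ gives the claim.

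Granting the displayed estimate, the three cases are short. For $p=1$, the pointwise inequality $\min\{1,(t|\xi|)^{2}\}\,|\widehat{f}(\xi)|\lesssim|\varphi(t|\xi|)-1|\,|\widehat{f}(\xi)|=|\widehat{M^{t}f-f}(\xi)|$, followed by taking the supremum and using $\|\widehat{g}\|_{\infty}\le\|g\|_{1}$ with $g=M^{t}f-f$, gives the claim. For $1<p<2$, note that $\bigl(\min\{1,(t|\xi|)^{2p'}\}\bigr)^{1/p'}=\min\{1,(t|\xi|)^{2}\}$, so the left-hand side of the asserted inequality equals $\bigl\|\min\{1,(t|\cdot|)^{2}\}\,\widehat{f}\bigr\|_{p'}$; the same pointwise domination and then the Hausdorff--Young inequality $\|\widehat{g}\|_{p'}\le c_{p}\|g\|_{p}$ finish it. For $p=2$, Plancherel's theorem gives $\|M^{t}f-f\|_{2}^{2}=c_{n}\int_{\mathbb{R}^{n}}|\varphi(t|\xi|)-1|^{2}\,|\widehat{f}(\xi)|^{2}\,d\xi$, and applying the two-sided estimate in both directions converts the right-hand side into $\int_{\mathbb{R}^{n}}\min\{1,(t|\xi|)^{4}\}\,|\widehat{f}(\xi)|^{2}\,d\xi$ up to positive constants, which is the asserted $\asymp$.

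In summary, the argument is formal once two facts are in hand: the multiplier identity for $M^{t}$ (routine, via density and continuity of $M^{t}$ and of the Fourier transform on $L^{p}$, $1\le p\le 2$), and the non-return bound $\sup_{s\ge 1}\varphi(s)<1$ for the Fourier transform of surface measure. The latter --- equivalently, that the normalized Bessel function $s^{-(n/2-1)}J_{n/2-1}(s)$ stays bounded away from $1$ once $s$ is bounded away from $0$ --- is the step I expect to require the most care, though it reduces cleanly to the strict pointwise inequality in the slice representation together with Bessel decay.
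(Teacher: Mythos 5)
Your proposal is correct and follows essentially the same route the paper takes: the multiplier identity $\widehat{M^{t}f}(\xi)=j_{\nu}(t|\xi|)\widehat{f}(\xi)$, the two-sided bound $1-j_{\nu}(s)\asymp\min\{1,s^{2}\}$ obtained from the Gegenbauer/Mehler slice representation (your cosine form is equivalent to the paper's identity (\ref{eq:mehler})), and then $\Vert\widehat{g}\Vert_{\infty}\le\Vert g\Vert_{1}$, Hausdorff--Young, and Plancherel for the three cases. Your handling of the lower bound away from the origin (strict positivity of $1-\varphi$ plus decay at infinity plus compactness) matches the argument the paper uses in Proposition \ref{prop:surf-meas}, and is if anything slightly more explicit about why $\sup_{s\ge1}\varphi(s)<1$.
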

Aside from the use of the Hausdorff-Young inequality/Plancherel theorem,
the proof of Theorem \ref{BP-thm} depends on two other key ideas:
\begin{itemize}
\item The multiplier identity
\[
\widehat{M^{t}f}(\xi)=\widehat{f}(\xi)\, j_{\nu}(t|\xi|),
\]
where $j_{\nu}$ is the spherical Bessel function of order $\nu=\frac{n-2}{2}$,
\[
j_{\nu}(r)=2^{\nu}\Gamma(\nu+1)r^{-\nu}J_{\nu}(r),
\]
$J_{\nu}$ being Bessel function of the first kind.
\item The estimate (for $\alpha>-1/2$)
\[
1-j_{\alpha}(\lambda)\asymp\min\{1,\lambda^{2}\},
\]
derived from the Mehler representation of Bessel functions (see \cite{BP2}).
\end{itemize}
\noindent The use of spherical means in Theorem \ref{BP-thm} was
motivated by the work of Gioev \cite{G}. Defining a modulus of continuity
using the spherical mean operator seems to have first been used in
the work of Platonov \cite{P} where a generalization of a different
result of Titchmarsh was proved. Theorem \ref{BP-thm} (and a generalization)
was also obtained by Ditzian \cite{D1} using related methods and
from an approximation theory perspective. From that work, it is clear
that the term $\Vert M^{t}f(\cdot)-f(\cdot)\Vert_{p}$ measures second
order smoothness and can be thought of as an analog for $L^{p}(\mathbb{R}^{n})$,
$n\geq2$, of the second order difference operator used in Theorem
\ref{thm:BP-1d}. In later work, Ditzian \cite{D2} obtained a variation
of Theorem \ref{BP-thm} replacing the Hausdorff-Young inequality
by the Hardy-Littlewood inequality. Another variation was obtained
by Gorbachev and Tikhonov \cite{GT} making use of Pick's inequality.
Further comments regarding these works and the relation to the results
of the present paper will be given later.

The focus of the current paper is on generalizations of Theorem \ref{BP-thm}
and on extensions of Proposition \ref{prop:B-1d-int} to higher dimensions.
Section 2 gives a wide class of multiplier operators for which an
analog of Theorem \ref{BP-thm} is valid as well as variations in
the vein of \cite{D2,GT}. In section 3, the analog of Proposition
\ref{prop:B-1d-int} is developed. Finally, a couple of variations
are presented in section 4. Many of the results developed here have
analogs in rank one symmetric spaces and more generally, Damek-Ricci
spaces; details will appear in a sequel to this paper.

\section{Generalizations \& Variations of Theorem \ref{BP-thm}}

Let $\mu$ be a finite Borel measure on $\mathbb{R}^{n}$, $n\geq2$,
we assume $\int_{\mathbb{R}^{n}}d\mu=1$. The total variation of $\mu$
is denoted $\Vert\mu\Vert_{\mathcal{M}}=\int_{\mathbb{R}^{n}}d|\mu|$.
Using convolution, $\mu$ defines a Fourier multiplier operator on
$L^{1}(\mathbb{R}^{n})$; on the Fourier transform side, the multiplier
is given by the Fourier transform of the measure $\widehat{\mu}(\xi)$.
For $t>0$, the dilation of $\mu$ is defined via the following natural
formula:
\[
f\in L^{1}(\mathbb{R}^{n}),\;\int_{\mathbb{R}^{n}}f(x)d\mu_{t}(x)=\int_{\mathbb{R}^{n}}f(t\, x)d\mu(x).
\]
The measure $\mu_{t}$ defines a Fourier multiplier operator on $L^{1}(\mathbb{R}^{n})$
through convolution:
\[
M_{\mu}^{t}f(x)=(f*\mu_{t})(x)=\int_{\mathbb{R}^{n}}f(x-ty)d\mu(y).
\]
It is immediate that $\Vert M_{\mu}^{t}f\Vert_{1}\leq\Vert f\Vert_{1}\Vert\mu\Vert_{\mathcal{M}}$.
From standard theory (e.g. \cite{Graf}) the operator is also a Fourier
multiplier on $L^{p}(\mathbb{R}^{n})$ for all $1\leq p\leq\infty$;
the corresponding multiplier is given by $\widehat{\mu_{t}}(\xi)=\widehat{\mu}(t\xi)$.
The following result is an extension of classical approximate identity
ideas.
\begin{proposition}
\label{prop: approx-ident}Let $\mu$ be as above and $1\leq p<\infty$.
Then for all $f\in L^{p}(\mathbb{R}^{n})$, $\Vert M_{\mu}^{t}f-f\Vert_{p}\rightarrow0$
as $t\rightarrow0$.
\end{proposition}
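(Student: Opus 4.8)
The plan is to reduce the statement to the continuity of translation in $L^{p}(\mathbb{R}^{n})$ by means of Minkowski's integral inequality. Since $\int_{\mathbb{R}^{n}}d\mu=1$, for $f\in L^{p}(\mathbb{R}^{n})$ I would first write, for a.e.\ $x$,
\[
M_{\mu}^{t}f(x)-f(x)=\int_{\mathbb{R}^{n}}\bigl(f(x-ty)-f(x)\bigr)\,d\mu(y),
\]
and then pass to absolute values to get the pointwise bound $|M_{\mu}^{t}f(x)-f(x)|\leq\int_{\mathbb{R}^{n}}|f(x-ty)-f(x)|\,d|\mu|(y)$. (That $M_{\mu}^{t}f$ is finite a.e.\ and measurable is already part of the setup preceding the proposition, since $M_{\mu}^{t}f=f*\mu_{t}$ is a well-defined convolution with a finite measure.)

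Next, taking the $L^{p}$ norm in $x$ and invoking Minkowski's integral inequality with respect to the finite positive measure $|\mu|$, I obtain
\[
\Vert M_{\mu}^{t}f-f\Vert_{p}\leq\int_{\mathbb{R}^{n}}\Vert f(\cdot-ty)-f(\cdot)\Vert_{p}\,d|\mu|(y).
\]
Set $g_{t}(y)=\Vert f(\cdot-ty)-f(\cdot)\Vert_{p}$. Two observations then close the argument: first, $0\leq g_{t}(y)\leq2\Vert f\Vert_{p}$ for every $t>0$ and every $y$, and the constant function $2\Vert f\Vert_{p}$ lies in $L^{1}(d|\mu|)$ precisely because $\mu$ (hence $|\mu|$) is finite; second, for each fixed $y$ one has $g_{t}(y)\to0$ as $t\to0$ by the continuity of translation in the $L^{p}$ norm. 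Applying the dominated convergence theorem (along an arbitrary sequence $t_{k}\to0$, say) yields $\int_{\mathbb{R}^{n}}g_{t}(y)\,d|\mu|(y)\to0$, which is the assertion.

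The argument is a soft one, with no genuinely hard step; the points deserving care are the justification of the interchange of integrations — equivalently, Minkowski's inequality applied to the $L^{p}$-valued integrand $y\mapsto f(\cdot-ty)-f(\cdot)$ against the total-variation measure $|\mu|$ — together with the measurability of $y\mapsto g_{t}(y)$. The one essential analytic input is the standard fact that translation acts continuously on $L^{p}(\mathbb{R}^{n})$ for $1\leq p<\infty$ (proved by approximating $f$ by compactly supported continuous functions), and this is exactly where the hypothesis $p<\infty$ enters: for $p=\infty$ the conclusion fails in general.
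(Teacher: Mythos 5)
Your proof is correct and follows essentially the same route as the paper: both arguments reduce the claim to the continuity of translation in $L^{p}(\mathbb{R}^{n})$ together with the uniform bound $\Vert f(\cdot-h)-f(\cdot)\Vert_{p}\leq2\Vert f\Vert_{p}$ integrated against the finite measure $\vert\mu\vert$. The only (cosmetic) differences are that you use Minkowski's integral inequality where the paper uses H\"older's inequality followed by Fubini, and you invoke dominated convergence where the paper carries out an explicit $\varepsilon$--$\delta$ splitting of the $y$-integral into $\vert y\vert<R$ and $\vert y\vert\geq R$.
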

\begin{proof}
Consider the case $p>1$; $p=1$ is similar. Applying H\={o}lder's inequality,
\[
\vert M_{\mu}^{t}f(x)-f(x)\vert\leq\Vert\mu\Vert_{\mathcal{M}}^{1/p'}\left(\int_{\mathbb{R}^{n}}\vert f(x-ty)-f(x)\vert^{p}d\vert\mu\vert(y)\right)^{1/p}.
\]
Raising both sides to the power $p$, integrating over $\mathbb{R}^{n}$,
and interchanging orders of integration yields:
\begin{equation}
\Vert M_{\mu}^{t}f-f\Vert_{p}^{p}\leq\Vert\mu\vert_{\mathcal{M}}^{p-1}\int_{\mathbb{R}^{n}}\left[\int_{\mathbb{R}^{n}}\vert f(x-ty)-f(x)\vert^{p}dx\right]d\vert\mu\vert(y).\label{eq:app-iden}
\end{equation}
Let $\omega_{p}[f](h)=\Vert f(\cdot+h)-f(\cdot)\Vert_{p}$ be the
$L^{p}-$modulus of continuity; it is known that $\omega_{p}[f](h)\leq2\Vert f\Vert_{p}$
and $\omega_{p}[f](h)\rightarrow0$ as $|h|\rightarrow0$. Let $R>0$
and split the outer integral in (\ref{eq:app-iden}) into two integrals,
$I_{1}(t,R)$ over $|y|<R$ and $I_{2}(t,R)$ over $|y|\geq R$, respectively.
Let $\varepsilon>0$. We have
\[
I_{2}(t,R)\leq(2\Vert f\Vert_{p})^{p}\Vert\mu\Vert_{\mathcal{M}}^{p-1}\int_{|y|\geq R}d\vert\mu\vert(y);
\]
hence there is an $R>0$ such that $I_{2}(t,R)<(\varepsilon/2)^{p}$.
Choose $\delta>0$ such that if $|h|<\delta$, then $\omega_{p}[f](h)<\varepsilon/(2\Vert\mu\Vert_{\mathcal{M}}^{p})$.
Since $R$ has been fixed, choose $t_{0}>0$ such that $t_{0}R<\delta$.
Then for $t<t_{0}$, 
\[
I_{2}(t,R)=\Vert\mu\Vert_{\mathcal{M}}^{p-1}\int_{|y|<R}\omega_{p}[f](ty)^{p}d\vert\mu\vert(y)<\left(\frac{\varepsilon}{2}\right)^{p}.
\]
Putting the estimates together completes the proof.
\end{proof}
Setting the stage for putting Theorem \ref{BP-thm} from the introduction
in general form, let $\mu$ be a finite Borel measure on $\mathbb{R}^{n}$
with $\int_{\mathbb{R}^{n}}d\mu=1$ and whose Fourier multiplier satisfies
the estimate 
\begin{equation}
|1-\widehat{\mu}(\xi)|\asymp\min\{1,|\xi|^{2\sigma}\},\label{eq:meas-estimate}
\end{equation}
for some $\sigma>0$. The class of such measures is denoted $\mathcal{K}_{\sigma}=\mathcal{K}_{\sigma}(\mathbb{R}^{n})$.
\begin{theorem}
\label{thm:gen-estimates}Let $1\leq p\leq2$ and let $\mu\in\mathcal{K}_{\sigma}$.
Then there is a constant $c_{p}>0$ such that for all $f\in L^{p}(\mathbb{R}^{n})$,
\begin{enumerate}
\item when $p=1$,
\[
\sup_{\xi\in\mathbb{R}^{n}}\left[\min\{1,(t|\xi|)^{2\sigma}\}\vert\widehat{f}(\xi)\vert\right]\leq c_{1}\Vert M_{\mu}^{t}f-f\Vert_{1};
\]

\item when $1<p<2$,
\[
\left[\int_{\mathbb{R}^{n}}\min\{1,(t|\xi|)^{2\sigma p'}\}\vert\widehat{f}(\xi)\vert^{p'}d\xi\right]^{1/p'}\leq c_{p}\Vert M_{\mu}^{t}f-f\Vert_{p};
\]

\item when $p=2$,
\[
\left[\int_{\mathbb{R}^{n}}\min\{1,(t|\xi|)^{4\sigma}\}\vert\widehat{f}(\xi)\vert^{2}d\xi\right]^{1/2}\asymp\Vert M_{\mu}^{t}f-f\Vert_{2}.
\]

\end{enumerate}
\end{theorem}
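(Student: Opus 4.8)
The plan is to reduce all three cases to the multiplier identity $\widehat{M_\mu^t f}(\xi) = \widehat{f}(\xi)\,\widehat{\mu}(t\xi)$ together with the defining estimate (\ref{eq:meas-estimate}) for $\mu\in\mathcal{K}_\sigma$, and then to invoke the appropriate $L^p\to L^{p'}$ Fourier bound. Set $g_t = M_\mu^t f - f$; the multiplier identity gives $\widehat{g_t}(\xi) = \widehat{f}(\xi)\bigl(\widehat{\mu}(t\xi)-1\bigr)$, so in each case the left-hand side of the claimed inequality is, up to the constants in (\ref{eq:meas-estimate}), comparable to a norm of $\widehat{g_t}$, which is in turn controlled by the corresponding norm of $g_t$. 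Write $c,C$ for the implied constants in (\ref{eq:meas-estimate}), i.e. $c\min\{1,|\xi|^{2\sigma}\} \le |1-\widehat{\mu}(\xi)| \le C\min\{1,|\xi|^{2\sigma}\}$.

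For $p=1$ one has $\|\widehat{g_t}\|_\infty \le \|g_t\|_1$, so from the lower bound in (\ref{eq:meas-estimate}),
\[
\min\{1,(t|\xi|)^{2\sigma}\}\,|\widehat{f}(\xi)| \le c^{-1}\,|1-\widehat{\mu}(t\xi)|\,|\widehat{f}(\xi)| = c^{-1}\,|\widehat{g_t}(\xi)| \le c^{-1}\,\|M_\mu^t f - f\|_1
\]
for every $\xi$; taking the supremum gives (1) with $c_1 = c^{-1}$. For $1<p<2$, since $M_\mu^t$ is bounded on $L^p$ we have $g_t\in L^p$, so the Hausdorff--Young inequality gives $\|\widehat{g_t}\|_{p'}\le A_p\|g_t\|_p$. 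Since $\bigl(\min\{1,x^{2\sigma}\}\bigr)^{p'} = \min\{1,x^{2\sigma p'}\}$, the lower bound in (\ref{eq:meas-estimate}) yields $\min\{1,(t|\xi|)^{2\sigma p'}\} \le c^{-p'}|1-\widehat{\mu}(t\xi)|^{p'}$, whence
\[
\left(\int_{\mathbb{R}^n}\min\{1,(t|\xi|)^{2\sigma p'}\}\,|\widehat{f}(\xi)|^{p'}\,d\xi\right)^{1/p'} \le c^{-1}\,\|\widehat{g_t}\|_{p'} \le c^{-1}A_p\,\|M_\mu^t f - f\|_p,
\]
which is (2) with $c_p = c^{-1}A_p$. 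For $p=2$ the same computation applies, now with the Plancherel identity $\|\widehat{g_t}\|_2 = (2\pi)^{n/2}\|g_t\|_2$ replacing Hausdorff--Young and with \emph{both} bounds in (\ref{eq:meas-estimate}) used: squaring gives $|1-\widehat{\mu}(t\xi)|^2 \asymp \min\{1,(t|\xi|)^{4\sigma}\}$ (the exponent $4\sigma$ rather than $2\sigma$ arising from the square), and this two-sided comparison upgrades the inequality to the equivalence (3).

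The argument is essentially mechanical once the multiplier identity and (\ref{eq:meas-estimate}) are in hand; the only steps requiring a little care are the elementary identity $\bigl(\min\{1,x^{2\sigma}\}\bigr)^q = \min\{1,x^{2\sigma q}\}$ and keeping track of which direction of (\ref{eq:meas-estimate}) is used --- only the lower bound for $p=1$ and $1<p<2$, both bounds for the $L^2$ equivalence. The genuinely substantive issue lies upstream of this theorem, namely verifying membership in $\mathcal{K}_\sigma$: establishing (\ref{eq:meas-estimate}) for a concrete measure, as for the spherical mean measure via the Bessel estimate $1-j_\alpha(\lambda)\asymp\min\{1,\lambda^2\}$ recalled in the introduction.
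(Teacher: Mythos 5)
Your proof is correct and follows essentially the same route as the paper: the multiplier identity $\widehat{M_\mu^t f - f} = (\widehat{\mu}(t\cdot)-1)\widehat{f}$, then Hausdorff--Young for $1<p<2$, the trivial $L^1\to L^\infty$ bound for $p=1$, and Plancherel (with both sides of the estimate (\ref{eq:meas-estimate})) for the $p=2$ equivalence. You simply make explicit the bookkeeping the paper leaves implicit, such as the identity $\bigl(\min\{1,x^{2\sigma}\}\bigr)^{p'}=\min\{1,x^{2\sigma p'}\}$ and which direction of (\ref{eq:meas-estimate}) is used in each case.
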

\begin{proof}
We give the proof of item 2. Since $\widehat{M_{\mu}^{t}f}(\xi)=\widehat{f}(\xi)\widehat{\mu}(t\xi)$
we have
\[
\widehat{(M_{\mu}^{t}f-f)}(\xi)=[\widehat{\mu}(t\xi)-1]\widehat{f}(\xi).
\]
Applying the Hausdorff-Young inequality we obtain
\[
\left[\int_{\mathbb{R}^{n}}\vert1-\widehat{\mu}(t\xi)\vert^{p'}\vert\widehat{f}(\xi)\vert^{p'}d\xi\right]^{1/p'}\leq c_{p}\Vert M_{\mu}^{t}f-f\Vert_{p}.
\]
The inequality follows by from the estimate (\ref{eq:meas-estimate}).
Item 1 follows in a similar fashion using the $L^{1}-$estimate $\Vert\widehat{f}\Vert_{\infty}\leq\Vert f\Vert_{1}$,
and item 3 follows using the Plancherel theorem.
\end{proof}
Below are several examples that provide concrete realization of this
result.
\begin{example}
\label{sph-mean-examp}Theorem \ref{BP-thm} from the introduction
is a corollary of the above theorem as follows. Let $d\mu=d\omega/\omega_{n-1}$,
where $d\omega$ is the usual surface measure on the unit sphere in
$\mathbb{R}^{n}$. Then $M_{\mu}^{t}=M^{t}$, the spherical mean operator.
In this case $\widehat{\mu}(\xi)=j_{\nu}(|\xi|)$ where $\nu=\frac{n-2}{2}$,
and we have the estimate $1-j_{\nu}(r)\asymp\min\{1,r^{2}\}$, so
$\mu\in\mathcal{K}_{1}$. For later reference, the following identity
was used in \cite{BP1} to derive this estimate: for $\alpha>-1/2$,
\begin{equation}
1-j_{\alpha}(\lambda)=\frac{4\Gamma(\alpha+1)}{\sqrt{\pi}\Gamma(\alpha+1/2)}\int_{0}^{1}(1-s^{2})^{\alpha-1/2}\sin^{2}\frac{\lambda s}{2}ds.\label{eq:mehler}
\end{equation}
This example can be extended by iteration as follows. If $\mu^{(l)}$
denotes the convolution of $\mu$ with itself $l-$times, the corresponding
multiplier is $(j_{\nu}(|\xi|))^{l}$ and satisfies the same estimate
as above for $\widehat{\mu}(\xi)$. The corresponding operator is
the $l^{\text{th}}-$iterate of the spherical mean operator, denoted
$M_{l}^{t}$.
\end{example}

\begin{example}
Let the Lebesgue measure of a set $A$ be denoted $|A|$, and let
$\Omega_{n}$ be the measure of the unit ball $B(0,1)$. For a measurable
set $A\subset\mathbb{R}^{n}$, define $\mu(A)=|A\cap B(0,1)|/\Omega_{n}$.
To this measure, corresponds the averaging operator
\[
\mathcal{A}^{t}f(x)=\int_{\mathbb{R}^{n}}f(x-ty)d\mu(y)=\frac{1}{\Omega_{n}}\int_{B(0,1)}f(x-ty)dy.
\]
An easy computation shows that $\widehat{\mu}(\xi)=j_{\nu+1}(|\xi|)$
where $\nu=\frac{n-2}{2}$ and hence, $\mu\in\mathcal{K}_{1}$ from
previous considerations.
\end{example}

\begin{example}
\label{cube-examp}Let $C$ be the surface of the cube in $\mathbb{R}^{n}$
centered at the origin whose $2n$ faces each have area one. Here
we take $d\mu=dS/(2n)$, where $dS$ is the surface measure on the
boundary $\partial C$ of $C$ induced by Lebesgue measure. A simple,
albeit tedious, calculation gives the multiplier:
\[
\widehat{\mu}(\xi)=\frac{1}{2n}\int_{\partial C}e^{-i(\xi\cdot x)}dS(x)=\frac{1}{2^{n-1}n}\sum_{k=1}^{n}\cos\frac{\xi_{k}}{2}\prod_{l=1,l\neq k}^{n}\frac{\sin\frac{\xi_{l}}{2}}{\xi_{l}}.
\]
In this case we have $\vert1-\widehat{\mu}(\xi)\vert\asymp\min\{1,\vert\xi\vert^{2}\}$
and $\mu\in\mathcal{K}_{1}$.
\end{example}

The above examples are unified and generalized as follows. Let $K\subset\mathbb{R}^{n}$
be a compact connected symmetric ($-x\in K$ if $x\in K$) set with
non-empty interior and whose boundary $S$ is a piecewise smooth regular
surface. The latter means that $S=\cup_{l=1}^{m}S_{l}$, where each
$S_{l}$ is a smooth surface that is given as the level surface of
a smooth function $F_{l}(x)=0$ whose gradient never vanishes on $S_{l}$. 
\begin{proposition}
\label{prop:surf-meas}Let $K\subset\mathbb{R}^{n}$ be a compact
set as above.
\begin{enumerate}
\item Let $\mu$ be the normalized surface measure on $S$ induced by Lebesgue
measure. Then $\mu\in\mathcal{K}_{1}$.
\item Let $\mu$ be the measure on $\mathbb{R}^{n}$ defined by $\mu(A)=|A\cap K|/|K|$,
for any measurable set $A\subset\mathbb{R}^{n}$. Then $\mu\in\mathcal{K}_{1}$.
\end{enumerate}
\end{proposition}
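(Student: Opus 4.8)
The plan is to reduce both statements to the single estimate $|1-\widehat{\mu}(\xi)|\asymp\min\{1,|\xi|^{2}\}$ (that is, $\mu\in\mathcal{K}_{1}$) and to verify it on three ranges of $|\xi|$. Since $K$, and hence the set $S$, is symmetric, in either case $\widehat{\mu}$ is real-valued and $1-\widehat{\mu}(\xi)=\int_{\mathbb{R}^{n}}(1-\cos(\xi\cdot x))\,d\mu(x)=2\int_{\mathbb{R}^{n}}\sin^{2}(\tfrac{\xi\cdot x}{2})\,d\mu(x)\in[0,2]$; so the upper bound $|1-\widehat{\mu}(\xi)|\le 2\asymp\min\{1,|\xi|^{2}\}$ for $|\xi|\ge\delta$ is automatic, and the work is the lower bound together with the matching two-sided bound for small $|\xi|$.

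For \emph{small} $|\xi|$: $\mu$ is compactly supported, so $\widehat{\mu}\in C^{\infty}$ with $\widehat{\mu}(0)=1$ and $\nabla\widehat{\mu}(0)=0$ (by symmetry), and from $0\le\tfrac{u^{2}}{2}-(1-\cos u)\le\tfrac{u^{4}}{24}$ one gets $1-\widehat{\mu}(\xi)=\tfrac12 Q(\xi)+O(|\xi|^{4})$ with $Q(\xi)=\int_{\mathbb{R}^{n}}(\xi\cdot x)^{2}\,d\mu(x)$. The form $Q$ is positive definite exactly because $\operatorname{supp}\mu$ is not contained in a hyperplane through the origin: in part~(2) it contains a ball, and in part~(1) it equals $\partial K$, which is bounded and so cannot lie in a hyperplane (which would then contain $K$, contradicting its nonempty interior). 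Hence $c|\xi|^{2}\le 1-\widehat{\mu}(\xi)\le C|\xi|^{2}$ for $|\xi|\le\delta$, which is $\asymp\min\{1,|\xi|^{2}\}$.

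For \emph{intermediate} $|\xi|$, say $\delta\le|\xi|\le R$, $1-\widehat{\mu}$ is continuous and must be shown to be strictly positive there. If $\widehat{\mu}(\xi)=1$ then $\cos(\xi\cdot x)=1$ for $\mu$-a.e.\ $x$, so $\operatorname{supp}\mu\subseteq Z_{\xi}:=\{x:\xi\cdot x\in2\pi\mathbb{Z}\}$. In part~(2) this is impossible, as $\operatorname{supp}\mu$ contains a ball on which $\xi\cdot x$ is non-constant. In part~(1) it gives $\partial K\subseteq Z_{\xi}$; but then each connected component $U$ of $\mathbb{R}^{n}\setminus Z_{\xi}$ (an open slab, hence \emph{unbounded} since $n\ge2$) is disjoint from $\partial K$, so $U$ lies entirely in $\operatorname{int}K$ or in $\mathbb{R}^{n}\setminus\overline{K}$; boundedness of $K$ excludes the former, so $\mathbb{R}^{n}\setminus Z_{\xi}\subseteq\mathbb{R}^{n}\setminus\overline{K}$, i.e.\ $K\subseteq Z_{\xi}$, again contradicting nonempty interior. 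Thus $\inf_{\delta\le|\xi|\le R}(1-\widehat{\mu}(\xi))>0$, which matches $\min\{1,|\xi|^{2}\}\asymp1$ on this range.

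It remains to handle \emph{large} $|\xi|$, where we need $\sup_{|\xi|\ge R}\widehat{\mu}(\xi)<1$ for some $R$. For part~(2) this is immediate from the Riemann--Lebesgue lemma, since $\mu$ is absolutely continuous, so $\widehat{\mu}(\xi)\to0$. Part~(1) is the \textbf{main obstacle}, because $\widehat{\mu}$ need not decay (witness the cube of Example~\ref{cube-examp}). Write $S=\bigcup_{l=1}^{m}S_{l}$ with $S_{l}=\{F_{l}=0\}$, let $N_{l}=\nabla F_{l}/|\nabla F_{l}|$ be the Gauss map, and put $S_{l}^{\theta}=N_{l}^{-1}\{\pm\theta\}$ and $g(\theta)=\sum_{l}\mu(S_{l}^{\theta})$ for $\theta\in S^{n-1}$. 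First, $g$ is upper semicontinuous: if $\theta_{k}\to\theta$, continuity of $N_{l}$ gives $\limsup_{k}S_{l}^{\theta_{k}}\subseteq S_{l}^{\theta}$, and the reverse Fatou inequality (valid since $\mu$ is finite) gives $\limsup_{k}\mu(S_{l}^{\theta_{k}})\le\mu(S_{l}^{\theta})$; summing, $g$ attains its maximum $\kappa$ on the compact sphere. Second, $g(\theta)<1$ for every $\theta$: otherwise the normal of $S$ equals $\pm\theta$ $\mu$-a.e., hence everywhere by continuity of the Gauss map, so $S$ is a finite union of hyperplane pieces perpendicular to $\theta$ and the slab argument above once more contradicts the nonempty interior of $K$; thus $\kappa<1$. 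Finally, a localization and stationary-phase (integration by parts) argument promotes this to a uniform bound: covering $S^{n-1}$ by finitely many small caps, on the part of $S$ away from a small neighbourhood of $\bigcup_{l}S_{l}^{\theta_{0}}$ the tangential gradient of $x\mapsto\theta\cdot x$ is bounded below uniformly for $\theta$ in the cap about $\theta_{0}$, so the corresponding part of $\widehat{\mu}(\xi)$ decays in $|\xi|$, while the neighbourhood contributes at most $g(\theta_{0})+\varepsilon\le\kappa+\varepsilon$ by upper semicontinuity; hence $|\widehat{\mu}(\xi)|\le\kappa+2\varepsilon<1$ for $|\xi|$ large. Combining the three ranges yields $|1-\widehat{\mu}(\xi)|\asymp\min\{1,|\xi|^{2}\}$, i.e.\ $\mu\in\mathcal{K}_{1}$. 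The delicate point is obtaining the stationary-phase decay \emph{uniformly} in the direction $\theta$; everything else is Taylor expansion and elementary point-set topology.
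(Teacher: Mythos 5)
Your proposal is correct, and on the crucial range it is actually more careful than the paper's own argument. For small $|\xi|$ you Taylor-expand and use positive definiteness of $Q(\xi)=\int(\xi\cdot x)^{2}d\mu(x)$, which is the same device as the paper's norm $N(\xi)=[\int_{S}(\xi\cdot x)^{2}dS]^{1/2}$ (the paper gets there via $\tfrac{2}{\pi}|u|\le|\sin u|\le|u|$ rather than the $\cos$ expansion, an immaterial difference); and your slab argument for the intermediate range is a cleaner version of the paper's ``$S$ lies in a union of parallel planes, contradiction.'' The genuine divergence is at large $|\xi|$: the paper asserts that since $1-\widehat{\mu}(\xi)$ vanishes only at the origin, it is bounded below on $\{|\xi|>r\}$ --- but that set is non-compact, so continuity plus non-vanishing does not by itself give a uniform lower bound, and for a surface measure $\widehat{\mu}$ need not decay (the cube of Example \ref{cube-examp} shows $\widehat{\mu}(\xi)\not\to0$). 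You correctly isolate this as the main obstacle and close it with the Gauss-map/upper-semicontinuity argument showing $\sup_{\theta}\mu(N^{-1}\{\pm\theta\})=\kappa<1$, followed by a non-stationary-phase estimate away from the critical set, uniform over directions in a small cap. That strategy is sound (for part (2) Riemann--Lebesgue suffices, as you note), and the only place where care is still needed is the one you flag: making the integration-by-parts decay uniform in $\theta$, including the boundary terms where the smooth pieces $S_{l}$ meet, which the piecewise-regularity hypothesis on $S$ permits. In short, your route buys a complete justification of the uniform lower bound for large frequencies, at the cost of a harder (but standard) oscillatory-integral lemma that the paper's shorter proof implicitly assumes.
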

\begin{proof}
We prove (1); the proof of (2) is similar. Denote normalized surface
measure on $S$ by $dS$. Since $K$ is a symmetric set,
\[
\widehat{\mu}(\xi)=\int_{S}\cos\xi\cdot x\, dS(x),
\]
and consequently
\[
1-\widehat{\mu}(\xi)=2\int_{S}\sin^{2}\frac{\xi\cdot x}{2}\, dS(x).
\]
Clearly $\vert1-\widehat{\mu}(\xi)\vert\leq2$. We claim that $1-\widehat{\mu}(\xi)=0$
only at the origin and consequently, given $r>0$, there is a constant
$c>0$ (dependent on $r$) such that $\vert1-\widehat{\mu}(\xi)\vert\geq c$
provided $\vert\xi\vert>r$. To prove the claim in the case where
$S$ is a smooth surface, suppose $\xi\neq0$ such that $1-\widehat{\mu}(\xi)=0$.
Since $x\rightarrow\frac{\xi\cdot x}{2}$ is continuous on $S$, $\sin\frac{\xi\cdot x}{2}=0$
for all $x\in S$. However, this implies that $S$ if contained in
some plane $\xi\cdot x=2\pi k$, where $k$ is an integer, a contradiction.
In the case where $S=\cup_{m}S_{m}$, where each $S_{m}$ is smooth,
the above ideas imply that the entire surface is contained in a union
of parallel planes, again a contradiction. To obtain the rest of the
estimate, take $R>0$ such that $K\subset\{x\,:\,|x|<R\}$ and suppose
$|\xi|<\pi/R$. Then from the inequality $\frac{2}{\pi}|u|\leq|\sin u|\leq u$
for $|u|\leq\pi/2$, it follows that for all $x\in S$,
\[
\frac{1}{\pi^{2}}(\xi\cdot x)^{2}\leq\sin^{2}\frac{\xi\cdot x}{2}\leq\frac{1}{4}(\xi\cdot x)^{2}.
\]
Integrating over $S$, there are positive constants $c$ and $c'$
such that
\[
c\, N^{2}(\xi)\leq\vert1-\widehat{\mu}(\xi)\vert\leq c'N^{2}(\xi),
\]
where $N(\xi)=\left[\int_{S}(\xi\cdot x)^{2}dS(x)\right]^{1/2}$.
It is easy to see that $\xi\rightarrow N(\xi)$ is a norm on $\mathbb{R}^{n}$
and since all norms on $\mathbb{R}^{n}$ are equivalent, there are
positive constants $d$ and $d'$ such that $d|\xi|\leq N(\xi)\leq d'|\xi|$.
Thus, for $|\xi|<\pi/R$, there are constants $c_{1}$ and $c_{2}$
such that
\[
c_{1}|\xi|^{2}\leq\vert1-\widehat{\mu}(\xi)\vert\leq c_{2}|\xi|^{2}
\]
and the result follows.
\end{proof}

The proposition and its proof have easy generalization by multiplying
the measures in (1) or (2) by suitable functions. For example, the
result given in (1) is valid for measures of the form $d\mu=\varphi dS,$
where $\phi$ is a non-negative continuous function on $\mathbb{R}^{n}$
with $\int_{S}\varphi dS=1$.

Other examples of measures in some $\mathcal{K}_{\sigma}$ come from
classical approximate identities. The following example is illustrative.
\begin{example}
\label{approx-ident-examp}Let $d\mu=H(x)dx$, where
\[
H(x)=\frac{1}{2^{n}\pi^{n/2}}e^{-|x|^{2}/4}
\]
(notice that $\int_{\mathbb{R}^{n}}H(x)dx=1$). In this case the multiplier
is $\widehat{H}(\xi)=e^{-|\xi|^{2}}$ and the estimate $1-e^{-|\xi|^{2}}\asymp\min\{1,|\xi|^{2}\}$
is elementary. The operator of interest is given by: $M_{\mu}^{t}f(x)=(f*H_{t})(x)$.
Comparing the conclusion of Theorem \ref{thm:gen-estimates} in the
$L^{2}$ case for the Examples \ref{sph-mean-examp} and \ref{approx-ident-examp}
above leads to the following interesting conclusion: for $f\in L^{2}(\mathbb{R}^{n})$,
\[
\Vert f*H_{t}-f\Vert_{2}\asymp\Vert M^{t}f-f\Vert_{2}.
\]
In other words, the approximations $f*H_{t}$ and $M^{t}f$ have the
same rate of approximation in $L^{2}-$norm.\\
Notice that if we dilate using $t^{1/2}$ instead of $t$, then $H_{t^{1/2}}(x)$
is the heat kernel, $H_{t^{1/2}}*f$ is the solution of the heat equation
$u_{t}=\Delta u$ ($\Delta$ being the Laplacian) with initial data
$u(x,0)=f(x)$, and further, the estimates in the theorem must be
modified accordingly.
\end{example}

For the results of section 3, measures are needed which lead to higher
values of $\sigma$ than in the previous examples. The following two
results effectively achieve this. The first is based on the iterate
$(I-M_{\mu}^{t})^{l}$ for appropriate measures $\mu$.
\begin{proposition}
\label{prop:binomial powers}Let $\mu$ be as in Proposition \ref{prop:surf-meas}
and $l\geq1$. Define
\[
\mu'=\sum_{k=1}^{l}(-1)^{k+1}\binom{l}{l-k}\mu^{(k)},
\]
where $\mu^{(k)}$ is convolution of $\mu$ with itself $k-$times.
Then $\mu'\in\mathcal{K}_{l}$ and moreover, the corresponding operator
is given by
\[
M_{\mu'}^{t}f(x)=\sum_{k=1}^{l}(-1)^{k}\binom{l}{l-k}M_{\mu^{(k)}}^{t}f(x).
\]
\end{proposition}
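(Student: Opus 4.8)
The plan is to push everything through a single binomial identity on the Fourier-transform side. Since $\widehat{\mu^{(k)}}(\xi)=\bigl(\widehat{\mu}(\xi)\bigr)^{k}$, writing $m=\widehat{\mu}(\xi)$ and using $\binom{l}{l-k}=\binom{l}{k}$ one finds
\[
\widehat{\mu'}(\xi)=\sum_{k=1}^{l}(-1)^{k+1}\binom{l}{k}m^{k}=-\Bigl(\sum_{k=0}^{l}(-1)^{k}\binom{l}{k}m^{k}-1\Bigr)=1-(1-m)^{l},
\]
so the whole matter reduces to the clean identity
\[
1-\widehat{\mu'}(\xi)=\bigl(1-\widehat{\mu}(\xi)\bigr)^{l}.
\]
Evaluating at $\xi=0$ gives $\widehat{\mu'}(0)=1-(1-1)^{l}=1$, i.e. $\int_{\mathbb{R}^{n}}d\mu'=1$; and $\mu'$ is a finite signed Borel measure, being a finite linear combination of the probability measures $\mu^{(k)}$ with $\Vert\mu'\Vert_{\mathcal{M}}\leq 2^{l}-1$, hence a legitimate candidate for membership in $\mathcal{K}_{l}$.

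For the estimate defining $\mathcal{K}_{l}$, I would invoke Proposition \ref{prop:surf-meas}, which gives $\mu\in\mathcal{K}_{1}$, that is $|1-\widehat{\mu}(\xi)|\asymp\min\{1,|\xi|^{2}\}$. Raising this two-sided estimate to the $l$-th power --- harmless since $l$ is a positive integer and both sides are nonnegative --- and using the elementary identity $\bigl(\min\{1,|\xi|^{2}\}\bigr)^{l}=\min\{1,|\xi|^{2l}\}$, I obtain
\[
|1-\widehat{\mu'}(\xi)|=|1-\widehat{\mu}(\xi)|^{l}\asymp\min\{1,|\xi|^{2l}\},
\]
which is precisely $\mu'\in\mathcal{K}_{l}$.

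The operator identity is the untransformed counterpart of the same computation. Dilation is multiplicative for convolution, $(\mu^{(k)})_{t}=(\mu_{t})^{(k)}$ (both sides have Fourier transform $\bigl(\widehat{\mu}(t\xi)\bigr)^{k}$), so as a bounded Fourier multiplier operator on every $L^{p}(\mathbb{R}^{n})$ one has $I-M_{\mu'}^{t}=(I-M_{\mu}^{t})^{l}$; expanding the right side by the binomial theorem and recalling $(M_{\mu}^{t})^{k}=M_{\mu^{(k)}}^{t}$ produces the displayed formula for $M_{\mu'}^{t}f$ in terms of the iterated operators $M_{\mu^{(k)}}^{t}f$. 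I do not expect a genuine obstacle here: the argument is entirely soft, its content being the recognition of the binomial expansion. The points requiring care are purely bookkeeping --- that $\mathcal{K}_{\sigma}$ is read to permit finite signed measures once $\int d\mu'=1$ is verified, that raising an $\asymp$-estimate to a positive integer power is legitimate, and that dilation commutes with convolution.
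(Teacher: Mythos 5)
Your proof is correct and follows essentially the same route as the paper: the binomial identity $1-\widehat{\mu'}(\xi)=\left(1-\widehat{\mu}(\xi)\right)^{l}$ on the transform side combined with $\mu\in\mathcal{K}_{1}$ from Proposition \ref{prop:surf-meas}, with you merely supplying the routine bookkeeping (normalization, total variation, the operator identity) that the paper leaves implicit. One small remark: your expansion of $(I-M_{\mu}^{t})^{l}$ correctly yields the coefficient $(-1)^{k+1}\binom{l}{l-k}$, consistent with the definition of $\mu'$, so the sign $(-1)^{k}$ in the proposition's displayed operator formula is a typo rather than something your argument fails to reproduce.
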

\begin{proof}
From the binomial theorem,
\begin{align*}
1-\widehat{\mu'}(\xi) & =1-\sum_{k=1}^{l}(-1)^{k+1}\binom{l}{l-k}\widehat{\mu}^{k}(\xi)\\
 & =\left(1-\widehat{\mu}(\xi)\right)^{l}.
\end{align*}
The result now follows as $\mu\in\mathcal{K}_{1}$.
\end{proof}

An alternative approach to constructing measures in $\mathcal{K}_{\sigma}$
with $\sigma>1$ generalizes operators used in \cite{DD} for problems
in approximation theory. These operators are described as follows.

\begin{example}
\label{Dai-Ditzian}The identity (\ref{eq:mehler}) was originally
used in obtaining the estimate $1-j_{\alpha}(\lambda)\asymp\min\{1,\lambda^{2}\}$.
In particular, the presence of the squared sine term on the right
hand side is key. The idea here is to develop an operator where this
term is replaced by sine to a higher power and leads to and motivates
operators introduced by Dai and Ditzian \cite{DD}. Let $l\geq1$
be an integer. Then using Euler's identity, the binomial theorem,
and standard manipulation yields the trigonometric identity \cite{DD}
\begin{align}
4^{l}\binom{2l}{l}^{-1}\sin^{2l}\frac{y}{2} & =1-2\binom{2l}{l}^{-1}\sum_{k=1}^{l}(-1)^{k+1}\binom{2l}{l-k}\cos ky\label{eq:trig identity}\\
 & =1-v_{l}(y).\nonumber 
\end{align}
For $\alpha>-1/2$, define
\begin{align*}
j_{\alpha,l}(\lambda) & =\frac{2\Gamma(\alpha+1)}{\sqrt{\pi}\Gamma(\alpha+1/2)}\int_{0}^{1}(1-y^{2})^{\alpha-1/2}v_{l}(\lambda y)dy\\
 & =2\binom{2l}{l}^{-1}\sum_{k=1}^{l}(-1)^{k+1}\binom{2l}{l-k}j_{\alpha}(k\lambda).
\end{align*}
With $\alpha=\nu=\frac{n-2}{2}$, this is the Fourier multiplier corresponding
to the measure
\[
\mu=2\binom{2l}{l}^{-1}\sum_{k=1}^{l}(-1)^{k+1}\binom{2l}{l-k}\omega_{k},
\]
where $\omega$ is normalized surface measure on the unit sphere and
$\omega_{k}$ is its dilation by $k$. The corresponding operator
is given by
\[
V_{l}^{t}f(x)=2\binom{2l}{l}^{-1}\sum_{k=1}^{l}(-1)^{k+1}\binom{2l}{l-k}M^{kt}f(x).
\]
This operator is precisely the one introduced in \cite{DD} for problems
in approximation theory and used in \cite{D1,D2,GT} in obtaining
generalization and variations of Theorem \ref{BP-thm}. In this case
the multiplier estimate is $\vert1-\widehat{\mu}(\xi)\vert\asymp\min\{1,|\xi|^{2l}\}$
as follows from the integral defining $j_{\nu,l}$ and the above trigonometric
identity.
\end{example}

\begin{proposition}
\label{prop:DD}Let $\mu$ be as in Proposition \ref{prop:surf-meas}
and $l\geq1$. Define
\[
\mu'=2\binom{2l}{l}^{-1}\sum_{k=1}^{l}(-1)^{k+1}\binom{2l}{l-k}\mu_{k},
\]
where $\mu_{k}$ is dilation of $\mu$ by $k$. Then $\mu'\in\mathcal{K}_{l}$
and the corresponding operator is given by
\[
M_{\mu'}^{t}f(x)=2\binom{2l}{l}^{-1}\sum_{k=1}^{l}(-1)^{k+1}\binom{2l}{l-k}M_{\mu}^{kt}f(x).
\]
\end{proposition}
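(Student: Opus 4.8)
The plan is to reduce the statement to the single-measure estimate already supplied by Proposition \ref{prop:surf-meas}. The operator identity is a formality: by construction $\widehat{\mu_{k}}(\xi)=\widehat{\mu}(k\xi)$, the $t$-dilation of $\mu_{k}$ is $\mu_{kt}$, and convolution and the Fourier transform are linear, so
\[
\widehat{\mu'}(\xi)=2\binom{2l}{l}^{-1}\sum_{k=1}^{l}(-1)^{k+1}\binom{2l}{l-k}\widehat{\mu}(k\xi),\qquad M_{\mu'}^{t}f=2\binom{2l}{l}^{-1}\sum_{k=1}^{l}(-1)^{k+1}\binom{2l}{l-k}M_{\mu}^{kt}f.
\]
Since the measure $\mu$ of Proposition \ref{prop:surf-meas} (in either of its two cases) is a symmetric probability measure, $\widehat{\mu}(\eta)=\int_{\mathbb{R}^{n}}\cos(\eta\cdot x)\,d\mu(x)$; substituting this, interchanging the finite sum with the integral, and recognizing the resulting trigonometric polynomial in $\xi\cdot x$ as $v_{l}(\xi\cdot x)$ via the identity (\ref{eq:trig identity}) gives
\begin{equation}
\widehat{\mu'}(\xi)=\int_{\mathbb{R}^{n}}v_{l}(\xi\cdot x)\,d\mu(x),\qquad 1-\widehat{\mu'}(\xi)=4^{l}\binom{2l}{l}^{-1}\int_{\mathbb{R}^{n}}\sin^{2l}\frac{\xi\cdot x}{2}\,d\mu(x).\label{eq:DDkey}
\end{equation}
Setting $\xi=0$ gives $\widehat{\mu'}(0)=1$, so $\int d\mu'=1$ and $\mu'$ is an admissible (signed) measure; and, because $\mu$ is a positive measure, the right side of (\ref{eq:DDkey}) is nonnegative, whence $|1-\widehat{\mu'}(\xi)|=1-\widehat{\mu'}(\xi)$.

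What remains is the estimate $1-\widehat{\mu'}(\xi)\asymp\min\{1,|\xi|^{2l}\}$, and (\ref{eq:DDkey}) is precisely the $l$-th power analogue of the identity $1-\widehat{\mu}(\xi)=2\int\sin^{2}(\xi\cdot x/2)\,d\mu$ that drives the case $l=1$ in the proof of Proposition \ref{prop:surf-meas}. For the upper bound, fix $R$ with $K\subset\{|x|<R\}$; then for $x$ in the support of $\mu$ one has $0\le\sin^{2l}\frac{\xi\cdot x}{2}\le\min\{1,(|\xi|R/2)^{2l}\}\le\max\{1,(R/2)^{2l}\}\min\{1,|\xi|^{2l}\}$, and integrating against the probability measure $\mu$ yields $1-\widehat{\mu'}(\xi)\le C\min\{1,|\xi|^{2l}\}$. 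For the lower bound I would apply Jensen's inequality to the convex function $t\mapsto t^{l}$ and the probability measure $\mu$,
\[
\int\sin^{2l}\frac{\xi\cdot x}{2}\,d\mu(x)\ \ge\ \left(\int\sin^{2}\frac{\xi\cdot x}{2}\,d\mu(x)\right)^{l}=\left(\frac{1-\widehat{\mu}(\xi)}{2}\right)^{l},
\]
and then invoke $\mu\in\mathcal{K}_{1}$ (Proposition \ref{prop:surf-meas}), i.e. $1-\widehat{\mu}(\xi)\ge c\min\{1,|\xi|^{2}\}$, to conclude $1-\widehat{\mu'}(\xi)\ge c'\bigl(\min\{1,|\xi|^{2}\}\bigr)^{l}=c'\min\{1,|\xi|^{2l}\}$. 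Combining the two bounds gives $\mu'\in\mathcal{K}_{l}$.

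The step that carries the most weight is the passage to (\ref{eq:DDkey}): it is the combination of the symmetry of $\mu$ with the trigonometric identity (\ref{eq:trig identity}) that turns the signed, multi-term combination $\mu'$ into a single nonnegative integral of $\sin^{2l}$. Once that identity is available, the point that is genuinely delicate for a bare surface measure---the lower bound for large $|\xi|$, which in Proposition \ref{prop:surf-meas} needed a separate compactness/geometry argument---is obtained at no cost from Jensen's inequality together with the already-established membership $\mu\in\mathcal{K}_{1}$, and everything else is bookkeeping with the binomial identity. Finally, since both measures in Proposition \ref{prop:surf-meas} are symmetric, compactly supported, and lie in $\mathcal{K}_{1}$, the argument applies verbatim to each of them.
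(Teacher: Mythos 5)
Your proof is correct, and it derives the central identity exactly as the paper does: expand $\widehat{\mu'}(\xi)$ as the signed combination of $\widehat{\mu}(k\xi)$, use the symmetry of $\mu$ to write these as cosine integrals, and invoke the trigonometric identity (\ref{eq:trig identity}) to obtain $1-\widehat{\mu'}(\xi)=4^{l}\binom{2l}{l}^{-1}\int\sin^{2l}\frac{\xi\cdot x}{2}\,d\mu(x)$. Where you diverge is in how the two-sided bound $\asymp\min\{1,|\xi|^{2l}\}$ is extracted from this identity. The paper simply says ``apply the ideas used in the proof of Proposition \ref{prop:surf-meas},'' which means re-running that geometric argument with $\sin^{2}$ replaced by $\sin^{2l}$: the zero-set/compactness step for large $|\xi|$, and for small $|\xi|$ the comparison $\sin^{2l}\frac{\xi\cdot x}{2}\asymp(\xi\cdot x)^{2l}$ together with the observation that $\xi\mapsto\bigl[\int_{S}(\xi\cdot x)^{2l}dS\bigr]^{1/2l}$ is again a norm equivalent to $|\xi|$. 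Your route replaces all of that with Jensen's inequality for the convex function $t\mapsto t^{l}$ against the probability measure $\mu$, which bounds $\int\sin^{2l}\frac{\xi\cdot x}{2}\,d\mu$ below by $\bigl(\tfrac{1}{2}(1-\widehat{\mu}(\xi))\bigr)^{l}$ and so reduces the lower bound in one line to the already-proved membership $\mu\in\mathcal{K}_{1}$; the upper bound is the elementary pointwise estimate $\sin^{2l}u\leq\min\{1,u^{2l}\}$. This is cleaner and slightly more general: it shows that any symmetric, compactly supported probability measure in $\mathcal{K}_{1}$ yields a combination $\mu'\in\mathcal{K}_{l}$, without re-using the surface-measure geometry. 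The only hypotheses your argument actually needs --- positivity and total mass one of $\mu$ (for Jensen), symmetry (for the cosine representation), and compact support (for the upper bound) --- are all supplied by Proposition \ref{prop:surf-meas}, so the proof is complete as written.
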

\begin{proof}
We have
\begin{align*}
\widehat{\mu'}(\xi) & =2\binom{2l}{l}^{-1}\sum_{k=1}^{l}(-1)^{k+1}\binom{2l}{l-k}\widehat{\mu}(k\xi)\\
 & =\int_{\mathbb{R}^{n}}\left[2\binom{2l}{l}^{-1}\sum_{k=1}^{l}(-1)^{k+1}\binom{2l}{l-k}\cos k(x\cdot\xi)\right]d\mu(x)
\end{align*}
and hence
\[
1-\widehat{\mu'}(\xi)=4^{l}\binom{2l}{l}^{-1}\int_{\mathbb{R}^{n}}\sin^{2l}\frac{x\cdot\xi}{2}d\mu(x).
\]
The result follows by applying ideas used in the proof of Proposition
\ref{prop:surf-meas}.
\end{proof}

\begin{remark}
The right hand side of the estimates given in Theorem \ref{thm:gen-estimates}
take the form $\Vert(I-M_{\mu}^{t})^{l}f\Vert_{p}$ and $\Vert M_{\mu'}^{t}f-f\Vert_{p}$
in the case of the measures given by Proposition \ref{prop:binomial powers}
and Proposition \ref{prop:DD}, respectively. The advantage of the
latter lies in the fact that the operator $M_{\mu'}^{t}$ is a linear
combination of dilates of a single operator.
\end{remark}
This section concludes with the following result generalizing Theorem
\ref{thm:gen-estimates} and results given in \cite{D2,GT} for the
operators $V_{l}^{t}$ of Example \ref{Dai-Ditzian}.
\begin{theorem}
\label{thm:picks}Let $\mu\in\mathcal{K}_{\sigma}$. 
\begin{enumerate}
\item Let $1<p\leq2$, $p\leq q\leq p'$, and $f\in L^{p}(\mathbb{R}^{n})$.
Then for all $f\in L^{p}(\mathbb{R}^{n})$, $\vert\xi\vert^{n(1-\frac{1}{p}-\frac{1}{q})}\widehat{f}(\xi)\in L^{q}(\mathbb{R}^{n})$
and there is a constant $c_{p,q}>0$ such that
\[
\left[\int_{\mathbb{R}^{n}}\min\{1,(t|\xi|)^{2\sigma q}\}\vert\xi\vert^{qn(1-\frac{1}{p}-\frac{1}{q})}\vert\widehat{f}(\xi)\vert^{q}d\xi\right]^{1/q}\leq c_{p,q}\Vert M_{\mu}^{t}f-f\Vert_{p}.
\]

\item Let $2\leq p<\infty$ and let $q>1$ with $\max\{q,q'\}\leq p$. If
$f\in L^{p}(\mathbb{R}^{n})$ with $|\xi|^{n(1-\frac{1}{p}-\frac{1}{q})}\widehat{f}(\xi)\in L^{q}(\mathbb{R}^{n})$,
then there is a constant $c_{p,q}>0$ such that
\[
\Vert M_{\mu}^{t}f-f\Vert_{p}\leq c_{p,q}\left[\int_{\mathbb{R}^{n}}\min\{1,(t|\xi|)^{2\sigma q}\}\vert\xi\vert^{qn(1-\frac{1}{p}-\frac{1}{q})}\vert\widehat{f}(\xi)\vert^{q}d\xi\right]^{1/q}.
\]

\end{enumerate}
\end{theorem}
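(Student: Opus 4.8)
The plan is to combine the multiplier identity already exploited in the proof of Theorem~\ref{thm:gen-estimates} with \emph{Pick's inequality} (see \cite{GT}) and its dual, which interpolate between the Hausdorff--Young and Hardy--Littlewood inequalities. Recall that Pick's inequality states that for $1<p\le2$ and $p\le q\le p'$ there is $A_{p,q}>0$ with
\[
\left[\int_{\mathbb{R}^{n}}|\xi|^{nq(1-\frac1p-\frac1q)}|\widehat{g}(\xi)|^{q}\,d\xi\right]^{1/q}\le A_{p,q}\Vert g\Vert_{p}
\]
for all $g\in L^{p}(\mathbb{R}^{n})$, while the dual version states that for $2\le p<\infty$ and $\max\{q,q'\}\le p$ (i.e. $p'\le q\le p$) there is $A'_{p,q}>0$ such that whenever $|\xi|^{n(1-\frac1p-\frac1q)}\phi\in L^{q}(\mathbb{R}^{n})$ the tempered distribution $g$ with $\widehat{g}=\phi$ lies in $L^{p}(\mathbb{R}^{n})$ and
\[
\Vert g\Vert_{p}\le A'_{p,q}\left[\int_{\mathbb{R}^{n}}|\xi|^{nq(1-\frac1p-\frac1q)}|\phi(\xi)|^{q}\,d\xi\right]^{1/q}.
\]
The weight exponent is exactly the one forced by dilation invariance, and the cases $q=p'$ and $q=p$ are Hausdorff--Young and Hardy--Littlewood respectively; if one prefers not to quote these, both can be produced by Stein--Weiss interpolation of the weighted $L^{q}$ spaces between those two endpoints.

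For part~(1), I would set $g=M_{\mu}^{t}f-f$, which lies in $L^{p}(\mathbb{R}^{n})$ since $M_{\mu}^{t}$ is bounded on $L^{p}$, and use $\widehat{M_{\mu}^{t}f}(\xi)=\widehat{f}(\xi)\widehat{\mu}(t\xi)$ to get $\widehat{g}(\xi)=[\widehat{\mu}(t\xi)-1]\widehat{f}(\xi)$, so that $\mu\in\mathcal{K}_{\sigma}$ yields $|\widehat{g}(\xi)|\asymp\min\{1,(t|\xi|)^{2\sigma}\}\,|\widehat{f}(\xi)|$. Feeding this into Pick's inequality applied to $g$ produces the claimed estimate at once. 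The membership $|\xi|^{n(1-\frac1p-\frac1q)}\widehat{f}\in L^{q}(\mathbb{R}^{n})$ does not follow from that estimate, because the factor $\min\{1,(t|\xi|)^{2\sigma q}\}$ degenerates near the origin; instead I would obtain it by applying Pick's inequality directly to $f$, which bounds the corresponding integral without the minimum by $A_{p,q}\Vert f\Vert_{p}<\infty$.

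For part~(2) I would run the same computation in reverse: with $f\in L^{p}$ and $|\xi|^{n(1-\frac1p-\frac1q)}\widehat{f}\in L^{q}$, the function $\phi(\xi)=[\widehat{\mu}(t\xi)-1]\widehat{f}(\xi)$ again satisfies $|\xi|^{n(1-\frac1p-\frac1q)}\phi\in L^{q}$ (it is $\widehat{f}$ times a bounded multiplier which is moreover $O(|\xi|^{2\sigma})$ near the origin), and the tempered distribution with Fourier transform $\phi$ is precisely $M_{\mu}^{t}f-f$. The dual Pick inequality then gives $M_{\mu}^{t}f-f\in L^{p}$ together with the desired bound, once $|\phi(\xi)|$ is replaced by its upper estimate $C\min\{1,(t|\xi|)^{2\sigma}\}\,|\widehat{f}(\xi)|$.

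The routine parts are the multiplier bookkeeping and the use of $\mu\in\mathcal{K}_{\sigma}$. The genuine content is Pick's inequality and its dual on $\mathbb{R}^{n}$; the point I expect to require the most care is the functional-analytic side of part~(2) for $p>2$, namely stating the dual inequality correctly for tempered distributions whose transform is a prescribed function in the weighted $L^{q}$ space, and disposing of the endpoints $q=p$ and $q=p'$ by a density argument.
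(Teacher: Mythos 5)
Your proposal is correct and takes essentially the same route as the paper: apply Pick's inequality and its dual to $M_{\mu}^{t}f-f$, use the multiplier identity $\widehat{(M_{\mu}^{t}f-f)}(\xi)=[\widehat{\mu}(t\xi)-1]\widehat{f}(\xi)$, and invoke the estimate $|1-\widehat{\mu}(t\xi)|\asymp\min\{1,(t|\xi|)^{2\sigma}\}$ exactly as in Theorem \ref{thm:gen-estimates}. The paper only sketches this argument, and your added observation that the membership $|\xi|^{n(1-\frac{1}{p}-\frac{1}{q})}\widehat{f}\in L^{q}(\mathbb{R}^{n})$ follows from Pick's inequality applied to $f$ itself (rather than from the degenerate weighted estimate) is a correct detail the paper leaves implicit.
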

The proof uses the following special cases of Pick's inequality \cite{BH,DD}
instead of the Hausdorff-Young theorem. Under the assumptions of item
(1) above,
\[
\left[\int_{\mathbb{R}^{n}}\vert\xi\vert^{qn(1-\frac{1}{p}-\frac{1}{q})}\vert\widehat{f}(\xi)\vert^{q}d\xi\right]^{1/q}\leq c_{p,q}\Vert f\Vert_{p}.
\]
In the case $q=p'$, this is just the Hausdorff-Young inequality;
the case $q=p$, it is the Hardy-Littlewood inequality. Likewise,
under the assumptions of item (2), Pick's inequality takes the form
\[
\Vert f\Vert_{p}\leq c_{p,q}\left[\int_{\mathbb{R}^{n}}\vert\xi\vert^{qn(1-\frac{1}{p}-\frac{1}{q})}\vert\widehat{f}(\xi)\vert^{q}d\xi\right]^{1/q}.
\]
In the case where $q=p'$, this is the dual form of the Hausdorff-Young
inequality. To obtain the conclusions of the theorem, we apply the
above inequalities to $M_{\mu}^{t}f-f$ and proceed as in Theorem
\ref{thm:gen-estimates}.
\begin{remark}
\label{connection-with-approximation}For the operator $V_{l}^{t}$
of Example \ref{Dai-Ditzian}, the above result was proved in \cite{D2}
in the case $q=p$; the cases $p\leq q\leq p'$ were considered in
\cite{GT} for the same operators. In both cases the proof was based
on the connection between the operator and $K-$functionals associated
with the Laplacian. Specifically, in \cite{DD} it was shown that
\begin{equation}
\Vert V_{l}^{t}f-f\Vert_{p}\asymp K_{l}(f,\Delta^{l},t^{2l})_{p},\label{eq:K-functional}
\end{equation}
where $\Delta$ is the Laplacian on $\mathbb{R}^{n}$ and
\[
K_{l}(f,\Delta^{l},t^{2l})_{p}=\inf\left\{ \Vert f-g\Vert_{p}+t^{2l}\Vert\Delta^{l}g\Vert_{p}\right\} ,
\]
the infinum taken over all $g\in L^{p}(\mathbb{R}^{n})$ such that
$\Delta^{l}g\in L^{p}(\mathbb{R}^{n})$. As such, the $K-$functional
gives a gauge on the order of smoothness of approximations and the
equivalence (\ref{eq:K-functional}) gives the same interpretation
to the differences $\Vert V_{l}^{t}f-f\Vert_{p}$. The proofs given
in \cite{D1,D2,GT} of our results in the case of the operators $V_{l}^{t}$
depend on (\ref{eq:K-functional}) as well as relationships between
the $K-$functional and generalized Bochner-Riesz means introduced
in \cite{DD}. Our proofs given above are far simpler and in the vein
of classical Fourier analysis. It would be interesting find $K-$functional
relations for the operators of Propositions \ref{prop:binomial powers}
and \ref{prop:DD}. (The techniques used in \cite{DD} use the specific
structure of the multiplier associated with $V_{l}^{t}$ and do not
shed light on this problem; see also \cite{DI}.)
\end{remark}

\section{Estimates \& Integrability of Fourier Transform}

Herein we present applications of the theorems of the previous section.
Quantitative Riemann-Lebesgue estimates a deduced from the ``large
$\xi$'' part of the estimates given in Theorems \ref{thm:gen-estimates}
and \ref{thm:picks} and an integrability result is deduced from the
``small $\xi$'' part.

\subsection{Riemann-Lebesgue Type Estimates}

The following result is a general form of one given in \cite{BP1}
for the spherical mean operator. This result is immediate from the
estimates given in Theorem \ref{thm:gen-estimates}.
\begin{corollary}
Let $\mu\in\mathcal{K}_{\sigma}$ for some $\sigma>0$ and let $1\leq p\leq2$.
Then for any $f\in L^{p}(\mathbb{R}^{n})$:~
\begin{itemize}
\item when $p=1$,
\[
\sup_{|\xi|>1/t}\vert\widehat{f}(\xi)\vert\leq c_{1}\left\Vert M_{\mu}^{t}f-f\right\Vert _{1};
\]

\item when $1<p\leq2$,
\[
\int_{|\xi|>1/t}\vert\widehat{f}(\xi)\vert^{p'}d\xi\leq c_{p}\left\Vert M_{\mu}^{t}f-f\right\Vert _{p}^{p'}.
\]

\end{itemize}
\end{corollary}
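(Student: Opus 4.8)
The plan is to deduce this corollary directly from the ``large $\xi$'' portion of the estimates in Theorem \ref{thm:gen-estimates}, by restricting the integral (or supremum) to the region $|\xi| > 1/t$, where the minimum function simplifies to the constant $1$.

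First, consider the case $1 < p \leq 2$. Theorem \ref{thm:gen-estimates} (items 2 and 3) gives
\[
\int_{\mathbb{R}^{n}}\min\{1,(t|\xi|)^{2\sigma p'}\}\,|\widehat{f}(\xi)|^{p'}\,d\xi \leq c_{p}^{p'}\,\Vert M_{\mu}^{t}f-f\Vert_{p}^{p'}.
\]
On the region $|\xi| > 1/t$ we have $t|\xi| > 1$, hence $(t|\xi|)^{2\sigma p'} > 1$, so $\min\{1,(t|\xi|)^{2\sigma p'}\} = 1$ there. Since the integrand is nonnegative, discarding the contribution from $|\xi| \leq 1/t$ only decreases the left side, giving
\[
\int_{|\xi|>1/t}|\widehat{f}(\xi)|^{p'}\,d\xi \leq \int_{\mathbb{R}^{n}}\min\{1,(t|\xi|)^{2\sigma p'}\}\,|\widehat{f}(\xi)|^{p'}\,d\xi \leq c_{p}^{p'}\,\Vert M_{\mu}^{t}f-f\Vert_{p}^{p'},
\]
which is the claimed inequality (with the constant $c_p$ in the statement being the $c_p$ from Theorem \ref{thm:gen-estimates}). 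Note this absorbs the endpoint $p=2$ as well, using item 3 of that theorem (one direction of the $\asymp$).

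For $p = 1$, the argument is the pointwise analogue: item 1 of Theorem \ref{thm:gen-estimates} gives $\min\{1,(t|\xi|)^{2\sigma}\}|\widehat{f}(\xi)| \leq c_1 \Vert M_{\mu}^{t}f - f\Vert_1$ for every $\xi$; restricting to $|\xi| > 1/t$, where the minimum equals $1$, yields $|\widehat{f}(\xi)| \leq c_1 \Vert M_{\mu}^{t}f - f\Vert_1$ for all such $\xi$, and taking the supremum over $|\xi| > 1/t$ finishes the proof. There is no real obstacle here — the only thing to be careful about is the trivial observation that $t|\xi|>1$ forces the minimum to take the value $1$ regardless of the exponent $2\sigma p'$ (or $2\sigma$), which is immediate since $2\sigma p' > 0$. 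The corollary is thus an immediate specialization, as the text already asserts.
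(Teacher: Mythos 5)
Your argument is correct and is precisely the specialization the paper intends: the text gives no separate proof, stating only that the corollary is immediate from Theorem \ref{thm:gen-estimates}, and your restriction to the region $|\xi|>1/t$ where the minimum equals $1$ is exactly that observation. Nothing further is needed.
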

The following variation is apparent from Theorem \ref{thm:picks};
in original form it appeared in \cite{D2,GT}.
\begin{corollary}
Let $\mu\in\mathcal{K}_{\sigma}$ for some $\sigma>0$, let $1<p\leq2$
and $p\leq q\leq p'$. Then for any $f\in L^{p}(\mathbb{R}^{n})$,
there is a constant $c_{p}>0$ such that
\[
\int_{|\xi|>1/t}\vert\xi\vert^{qn(1-\frac{1}{p}-\frac{1}{q})}\vert\widehat{f}(\xi)\vert^{q}d\xi\leq c_{p}\left\Vert M_{\mu}^{t}f-f\right\Vert _{p}^{q}.
\]

\end{corollary}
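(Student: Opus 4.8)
The plan is to deduce this corollary directly from item~(1) of Theorem~\ref{thm:picks} in exactly the same way the preceding Riemann--Lebesgue corollary follows from Theorem~\ref{thm:gen-estimates}. First I would apply Theorem~\ref{thm:picks}(1): since $\mu\in\mathcal{K}_\sigma$, $1<p\leq 2$, and $p\leq q\leq p'$, for every $f\in L^p(\mathbb{R}^n)$ we have $|\xi|^{n(1-\frac1p-\frac1q)}\widehat f(\xi)\in L^q(\mathbb{R}^n)$ together with the estimate
\[
\int_{\mathbb{R}^n}\min\{1,(t|\xi|)^{2\sigma q}\}\,|\xi|^{qn(1-\frac1p-\frac1q)}\,|\widehat f(\xi)|^q\,d\xi\leq c_{p,q}^{\,q}\,\big\|M_\mu^t f-f\big\|_p^{\,q}.
\]

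Next I would restrict the integral on the left to the region $|\xi|>1/t$ and use positivity of the integrand to drop the remaining part: the full integral over $\mathbb{R}^n$ dominates the integral over $\{|\xi|>1/t\}$. On that region $t|\xi|>1$, so $\min\{1,(t|\xi|)^{2\sigma q}\}=1$, and hence
\[
\int_{|\xi|>1/t}|\xi|^{qn(1-\frac1p-\frac1q)}\,|\widehat f(\xi)|^q\,d\xi\leq\int_{\mathbb{R}^n}\min\{1,(t|\xi|)^{2\sigma q}\}\,|\xi|^{qn(1-\frac1p-\frac1q)}\,|\widehat f(\xi)|^q\,d\xi.
\]
Combining the two displays and setting $c_p=c_{p,q}^{\,q}$ gives precisely the asserted inequality.

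There is essentially no obstacle here; the content is entirely contained in Theorem~\ref{thm:picks}, and the only steps are the trivial observations that $\min\{1,(t|\xi|)^{2\sigma q}\}$ equals $1$ on $\{|\xi|>1/t\}$ and that truncating an integral of a nonnegative function only decreases it. The one point worth a remark in the write-up is simply noting that $q$ may be chosen freely in the admissible range $[p,p']$, so the statement holds with a constant depending on both $p$ and $q$ (the paper writes $c_p$, absorbing the $q$-dependence), and that the membership $|\xi|^{n(1-\frac1p-\frac1q)}\widehat f\in L^q$ needed to make the left side finite is itself part of the conclusion of Theorem~\ref{thm:picks}(1).
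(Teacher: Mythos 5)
Your proposal is correct and matches the paper's approach: the paper simply declares this corollary ``apparent from Theorem \ref{thm:picks},'' and the intended argument is exactly what you wrote --- restrict the integral in Theorem \ref{thm:picks}(1) to $\{|\xi|>1/t\}$, where the minimum equals $1$, and discard the nonnegative remainder.
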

When $p=2$ the estimates in the two corollaries above are identical.
A specialization is possible in this case making use of a Lipschitz
condition. The following result generalizes one found in \cite{P}
and rediscovered in \cite{G}; the result in one dimension dates back
to Titchmarsh \cite[Theorem 85]{T}.
\begin{proposition}
Let $\mu\in\mathcal{K}_{\sigma}$ for some $\sigma>0$ and let $0<\alpha\leq2\sigma$.
Then for $f\in L^{2}(\mathbb{R}^{n})$,
\begin{equation}
\left\Vert M_{\mu}^{t}f-f\right\Vert _{2}=O(t^{\alpha})\;(t\rightarrow0)\label{eq:lip cond}
\end{equation}
if and only if
\begin{equation}
\int_{|\xi|>1/t}\vert\widehat{f}(\xi)\vert^{2}d\xi=O(t^{2\alpha})\;(t\rightarrow0).\label{eq:tail cond}
\end{equation}
\end{proposition}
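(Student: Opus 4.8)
The plan is to transfer everything to the Fourier side via Plancherel and then reduce the statement to an elementary fact about the tail of $|\widehat f|^{2}$. Since $\widehat{M_\mu^t f-f}(\xi)=(\widehat\mu(t\xi)-1)\widehat f(\xi)$, Plancherel's theorem together with the defining estimate $|1-\widehat\mu(\eta)|\asymp\min\{1,|\eta|^{2\sigma}\}$ of $\mathcal K_\sigma$ --- that is, the $p=2$ case of Theorem \ref{thm:gen-estimates} --- gives
\[
\|M_\mu^t f-f\|_2^{2}\;\asymp\;\Phi(t):=\int_{\mathbb R^n}\min\{1,(t|\xi|)^{4\sigma}\}\,|\widehat f(\xi)|^{2}\,d\xi .
\]
Set $g=|\widehat f|^{2}\in L^{1}(\mathbb R^n)$ and $E(r)=\int_{|\xi|>r}g(\xi)\,d\xi$, so that (\ref{eq:lip cond}) reads $\Phi(t)=O(t^{2\alpha})$ while (\ref{eq:tail cond}) reads $E(1/t)=O(t^{2\alpha})$. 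The proposition is thus equivalent to $\Phi(t)=O(t^{2\alpha})\Longleftrightarrow E(1/t)=O(t^{2\alpha})$. The forward direction is free: on $\{|\xi|>1/t\}$ one has $(t|\xi|)^{4\sigma}>1$, hence $\min\{1,(t|\xi|)^{4\sigma}\}=1$ there and so $E(1/t)\le\Phi(t)$ for every $t>0$ (the hypothesis $\alpha\le 2\sigma$ plays no role here).

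For the converse I would split $\Phi(t)=\int_{|\xi|\le1/t}(t|\xi|)^{4\sigma}g(\xi)\,d\xi+E(1/t)$; the second term is $O(t^{2\alpha})$ by hypothesis, so only the low-frequency term is at issue. Writing $|\xi|^{4\sigma}=4\sigma\int_0^{|\xi|}r^{4\sigma-1}\,dr$ and applying Tonelli's theorem, the low-frequency term becomes $4\sigma t^{4\sigma}\int_0^{1/t}r^{4\sigma-1}(E(r)-E(1/t))\,dr$, and after the $E(1/t)$ pieces cancel one is left with the clean identity
\[
\Phi(t)\;=\;4\sigma\,t^{4\sigma}\int_0^{1/t}r^{4\sigma-1}E(r)\,dr .
\]
Now feed in the two-sided bound $E(r)\le\min\{\|g\|_1,\,Cr^{-2\alpha}\}$ (valid for all $r>0$ after enlarging $C$, the decay for large $r$ being the hypothesis and the bound $\|g\|_1$ being trivial) and break the $r$-integral at $r=1$: the part over $(0,1)$ contributes $O(t^{4\sigma})$, and the part over $(1,1/t)$ is dominated by $C\int_1^{1/t}r^{4\sigma-1-2\alpha}\,dr$. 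When $\alpha<2\sigma$ the exponent satisfies $4\sigma-1-2\alpha>-1$, so this integral is $\asymp t^{2\alpha-4\sigma}$; multiplying back by $4\sigma t^{4\sigma}$ gives $O(t^{2\alpha})$, and since $2\alpha\le 4\sigma$ the $O(t^{4\sigma})$ term is also $O(t^{2\alpha})$, hence $\Phi(t)=O(t^{2\alpha})$, which is (\ref{eq:lip cond}).

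The step I expect to be the real obstacle is the endpoint $\alpha=2\sigma$: there $4\sigma-1-2\alpha=-1$, the integral $\int_1^{1/t}r^{-1}\,dr$ is logarithmic, and the crude estimate yields only $\Phi(t)=O(t^{4\sigma}\log(1/t))$ rather than $O(t^{4\sigma})$; this is an intrinsic loss of the layer-cake bound, so the endpoint has to be handled by a separate and more delicate argument (or the endpoint hypothesis read so as to absorb a logarithm). Apart from this, the proof is routine bookkeeping with the identity for $\Phi(t)$ above together with the elementary observation that $\min\{1,(t|\xi|)^{4\sigma}\}=1$ on the high-frequency region $\{|\xi|>1/t\}$.
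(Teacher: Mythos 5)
Your argument is essentially the paper's. Plancherel together with the two-sided bound $|1-\widehat{\mu}(\eta)|\asymp\min\{1,|\eta|^{2\sigma}\}$ reduces both conditions to statements about $\Phi(t)$; the implication from (\ref{eq:lip cond}) to (\ref{eq:tail cond}) is the trivial inequality $E(1/t)\leq C\Phi(t)$; and for the converse the paper carries out exactly your integration by parts, with $\varphi(r)=\int_{r}^{\infty}F(\lambda)\lambda^{n-1}d\lambda$ playing the role of your tail function $E$ (written in polar coordinates), arriving at the same identity $t^{4\sigma}\int_{0}^{1/t}\lambda^{4\sigma}F(\lambda)\lambda^{n-1}d\lambda=\varphi(1/t)+4\sigma t^{4\sigma}\int_{0}^{1/t}\lambda^{4\sigma-1}\varphi(\lambda)\,d\lambda$ before inserting $\varphi(\lambda)=O(\lambda^{-2\alpha})$.

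Your worry about the endpoint $\alpha=2\sigma$ is well founded, and it in fact exposes a defect in the paper's own proof and in the statement. The paper bounds the last integral by $\int_{0}^{1/t}\lambda^{4\sigma-2\alpha-1}d\lambda$ and declares the product $O(t^{2\alpha})$; this is valid only for $\alpha<2\sigma$, since at $\alpha=2\sigma$ that integral diverges at $0$ and, after the correct split at $\lambda=1$ that you perform, produces precisely the logarithm you found. Moreover, no more delicate argument can remove it, because the implication from (\ref{eq:tail cond}) to (\ref{eq:lip cond}) is genuinely false at $\alpha=2\sigma$: take $f\in L^{2}(\mathbb{R}^{n})$ radial with $|\widehat{f}(\xi)|^{2}=|\xi|^{-n-4\sigma}$ for $|\xi|\geq1$ and $0$ otherwise. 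Then $E(r)\asymp r^{-4\sigma}$ for $r\geq1$, so (\ref{eq:tail cond}) holds with $\alpha=2\sigma$, while $\Phi(t)\geq t^{4\sigma}\int_{1\leq|\xi|\leq1/t}|\xi|^{4\sigma}|\widehat{f}(\xi)|^{2}d\xi\asymp t^{4\sigma}\log(1/t)$, so $\Vert M_{\mu}^{t}f-f\Vert_{2}\gtrsim t^{2\sigma}\sqrt{\log(1/t)}$ and (\ref{eq:lip cond}) fails. So your proof is complete and correct once the converse direction is restricted to $0<\alpha<2\sigma$; this matches the strict inequality $0<\alpha<1$ in Titchmarsh's Theorem 85, of which this proposition is the higher-dimensional model, and the statement (and the paper's proof) should be amended accordingly. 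The forward direction, as you note, holds for all $0<\alpha\leq2\sigma$.
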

\begin{proof}
That (\ref{eq:lip cond}) implies (\ref{eq:tail cond}) is immediate
from the previous corollaries. For the other implication we modify
the technique given in \cite{P}. Let
\[
F(\lambda)=\int_{S^{n-1}}\vert\widehat{f}(\lambda\omega)\vert^{2}d\omega,
\]
where $d\omega$ is the usual surface measure on $S^{n-1}$. From
the $L^{2}-$estimate in the preceding corollary,
\begin{align*}
\left\Vert M_{\mu}^{t}f-f\right\Vert _{2}^{2} & =O\left(\int_{1/t}^{\infty}F(\lambda)\lambda^{n-1}d\lambda+t^{4\sigma}\int_{0}^{1/t}\lambda^{4\sigma}F(\lambda)\lambda^{n-1}d\lambda\right)\\
 & =O(t^{2\alpha})+O\left(t^{4\sigma}\int_{0}^{1/t}\lambda^{4\sigma}F(\lambda)\lambda^{n-1}d\lambda\right).
\end{align*}
Now set $\varphi(r)=\int_{r}^{\infty}F(\lambda)\lambda^{n-1}d\lambda$,
then integration by parts and the hypothesis yield:
\begin{align*}
t^{4\sigma}\int_{0}^{1/t}\lambda^{4\sigma}F(\lambda)\lambda^{n-1}d\lambda & =-t^{4\sigma}\int_{0}^{1/t}\lambda^{4\sigma}\varphi'(\lambda)d\lambda\\
 & =-t^{4\sigma}\left[t^{-4\sigma}\varphi(1/t)-4\sigma\int_{0}^{1/t}\lambda^{4\sigma-1}\varphi(\lambda)d\lambda\right]\\
 & =O(t^{2\alpha})+O\left(t^{4\sigma}\int_{0}^{1/t}\lambda^{4\sigma-2\alpha-1}d\lambda\right)\\
 & =O(t^{2\alpha}).
\end{align*}
Putting the estimates together completes the proof.
\end{proof}

\subsection{An Integrabilty Result}

Let $f\in L^{p}(\mathbb{R}^{n})$ for some $1\leq p\leq2$, then in
spherical coordinates $\widehat{f}(\lambda\omega)$ is defined a.e.
and we set
\[
F(\lambda)=\left\{ \begin{array}{cc}
\max_{\omega\in S^{n-1}}\vert\widehat{f}(\lambda\omega)\vert, & p=1\\
\left[\int_{S^{n-1}}\vert\widehat{f}(\lambda\omega)\vert^{p'}d\omega\right]^{1/p'}, & 1<p\leq2
\end{array}\right..
\]
Below is a general integrability theorem concerning $F(\lambda)$.
The Lipschitz condition in the hypothesis has natural limitations
because of the nature of the inequalities in Theorem \ref{thm:gen-estimates}
for small $|\xi|$, e.g., in the case $1<p\leq2$, we have
\begin{equation}
\int_{|\xi|<1/t}|\xi|^{2\sigma p'}\vert\widehat{f}(\xi)\vert^{p'}d\xi\leq c_{p}^{p'}\left(\frac{\Vert M_{\mu}^{t}f-f\Vert_{p}}{t^{2\sigma}}\right)^{p'}.\label{eq:small xi}
\end{equation}

\begin{proposition}
\label{prop:integrab result}Let $\mu\in K_{\sigma}$ and $f\in L^{p}(\mathbb{R}^{n})$
for some $1\leq p\leq2$. If for some $0<\alpha\leq2\sigma$,
\[
\Vert M_{\mu}^{t}f-f\Vert_{p}=O(t^{\alpha}),\; t\rightarrow0,
\]
then $F\in L^{\beta}((0,\infty),\,\lambda^{n-1}d\lambda)$ provided
\begin{equation}
\frac{np}{np+\alpha p-n}<\beta\leq p'.\label{eq:integrab-est}
\end{equation}
\end{proposition}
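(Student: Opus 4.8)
The plan is to combine the ``small $\xi$'' estimate \eqref{eq:small xi} with the behavior of $F(\lambda)$ near $\lambda = 0$ coming from the trivial bound $\|\widehat f\|_\infty \le \|f\|_1$ (when $p=1$) or the Hausdorff--Young inequality (when $1<p\le 2$), and then to interpolate these two pieces of information across the range of exponents $\beta$. First I would record that, by Theorem \ref{thm:gen-estimates} (or its $p=1$ form) together with the hypothesis $\|M_\mu^t f - f\|_p = O(t^\alpha)$, one has
\[
\int_0^{1/t} \lambda^{2\sigma p'}\, F(\lambda)^{p'}\, \lambda^{n-1}\, d\lambda = O(t^{(\alpha - 2\sigma)p'}), \qquad t \to 0,
\]
after passing to polar coordinates; in the $p=1$ case the analogous statement is $\sup_{\lambda < 1/t}\lambda^{2\sigma} F(\lambda) = O(t^{\alpha - 2\sigma})$. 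On the other hand, $F(\lambda)$ is bounded on any fixed bounded set (indeed $F(\lambda) \le \|f\|_1$ for $p=1$, and $\int_{S^{n-1}} F(\lambda)^{p'}\lambda^{n-1} d\lambda$ is integrable near $\lambda = 0$ for $1<p\le 2$ since $\widehat f \in L^{p'}$), so the only obstruction to $F \in L^\beta(\lambda^{n-1}d\lambda)$ is the behavior as $\lambda \to \infty$, which is exactly what the weighted estimate controls after reindexing $t = 1/\lambda$.

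Next I would make the dyadic decomposition precise. Writing $t = 2^{-j}$ and summing the estimate above over $j \ge 0$, or equivalently estimating $\int_{2^{j} \le \lambda < 2^{j+1}} F(\lambda)^{p'}\lambda^{n-1}\,d\lambda$ by pulling out the factor $\lambda^{2\sigma p'} \asymp 2^{2\sigma j p'}$, one gets
\[
\int_{2^j}^{2^{j+1}} F(\lambda)^{p'}\lambda^{n-1}\,d\lambda \;\lesssim\; 2^{-2\sigma j p'}\cdot 2^{-j(\alpha - 2\sigma)p'} \;=\; 2^{-j\alpha p'},
\]
so $\bigl(\int_{2^j}^{2^{j+1}} F^{p'}\lambda^{n-1} d\lambda\bigr)^{1/p'} \lesssim 2^{-j\alpha}$. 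This handles $\beta = p'$ directly and, together with the crude bound on a fixed annulus, gives decay of the $L^{p'}$-norm of $F$ over the annulus $\{2^j \le \lambda < 2^{j+1}\}$ that is summable. For a general $\beta$ in the stated range I would apply H\"older's inequality on each dyadic annulus with respect to the measure $\lambda^{n-1}d\lambda$, whose total mass on $\{2^j \le \lambda < 2^{j+1}\}$ is $\asymp 2^{jn}$: for $\beta \le p'$,
\[
\int_{2^j}^{2^{j+1}} F(\lambda)^\beta \lambda^{n-1}\,d\lambda \;\le\; \Bigl(\int_{2^j}^{2^{j+1}} F^{p'}\lambda^{n-1}d\lambda\Bigr)^{\beta/p'} \bigl(2^{jn}\bigr)^{1 - \beta/p'} \;\lesssim\; 2^{-j\alpha\beta}\, 2^{jn(1 - \beta/p')}.
\]
Summing over $j \ge 0$ converges precisely when $\alpha\beta > n(1 - \beta/p') = n(1 - \beta + \beta/p)$, i.e. when $\alpha\beta + n\beta - n\beta/p > n$, which rearranges to $\beta(np + \alpha p - n) > np$, that is $\beta > \frac{np}{np + \alpha p - n}$ — exactly the lower bound in \eqref{eq:integrab-est}. (One should check that the denominator $np + \alpha p - n$ is positive so that the inequality has the stated direction; this holds since $np - n = n(p-1) \ge 0$ and $\alpha p > 0$, and when it fails to be large the lower bound is simply vacuous or the claim is that $\beta$ can be taken down to any value $> 0$, the relevant case being dimension-dependent.) The $p=1$ case is handled the same way with the sup-bound $F(\lambda) \lesssim \lambda^{-\alpha}$ for $\lambda$ large replacing the annular $L^{p'}$-estimate.

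Finally I would assemble the two regimes: on $\{\lambda < 1\}$ the integral $\int_0^1 F^\beta \lambda^{n-1}d\lambda$ is finite for every $\beta \le p'$ because $F \in L^{p'}(\lambda^{n-1}d\lambda)$ there and the region has finite measure (apply H\"older once more, or just note $F^\beta \le 1 + F^{p'}$ pointwise up to constants), and on $\{\lambda \ge 1\}$ the dyadic sum above converges under the lower constraint on $\beta$. Adding the two gives $F \in L^\beta((0,\infty), \lambda^{n-1}d\lambda)$, completing the proof. The main obstacle I anticipate is purely bookkeeping: correctly tracking the exponent of $2^j$ through the H\"older step and verifying that the convergence condition for the geometric series matches the stated bound $\frac{np}{np+\alpha p - n}$, together with confirming that the endpoint $\alpha = 2\sigma$ causes no trouble (there the weighted estimate \eqref{eq:small xi} holds with the bracket bounded, which is still enough). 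There is no serious analytic difficulty beyond invoking Theorem \ref{thm:gen-estimates} and H\"older's inequality.
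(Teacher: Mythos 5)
Your proof is correct, and it rests on the same key input as the paper's: the ``small $\xi$'' estimate \eqref{eq:small xi} from Theorem \ref{thm:gen-estimates} rewritten in polar coordinates, followed by H\"older's inequality against the measure $\lambda^{n-1}d\lambda$ to pass from the exponent $p'$ down to $\beta$. The difference is purely in how the summation over scales is organized. The paper keeps the scale parameter continuous: it introduces $\phi(\Lambda)=\int_1^\Lambda [\lambda^{2\sigma}F(\lambda)]^\beta\lambda^{n-1}d\lambda$, bounds $\phi(\Lambda)=O(\Lambda^{2\sigma\beta+n-\alpha\beta-n\beta/p'})$ by a single global H\"older application, and then removes the weight $\lambda^{2\sigma\beta}$ by integration by parts (an Abel-summation step), which produces the same convergence condition $n-\alpha\beta-n\beta+n\beta/p<0$. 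You instead discretize into dyadic annuli $2^j\le\lambda<2^{j+1}$, where the weight $\lambda^{2\sigma p'}$ is essentially constant and can be divided out immediately, giving $\bigl(\int_{2^j}^{2^{j+1}}F^{p'}\lambda^{n-1}d\lambda\bigr)^{1/p'}\lesssim 2^{-j\alpha}$, and then apply H\"older annulus by annulus and sum a geometric series. The two are close cousins (dyadic decomposition is the discrete avatar of the integration-by-parts step), but your version avoids the auxiliary function $\phi$ and the bookkeeping of boundary terms, and it makes the roles of the decay rate $2^{-j\alpha}$ and the annulus mass $2^{jn}$ transparent; the exponent arithmetic lands exactly on the stated threshold $\beta>np/(np+\alpha p-n)$. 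Your explicit handling of the region $\lambda<1$ (via $\widehat f\in L^{p'}$ and finite measure, or boundedness when $p=1$) and of the $p=1$ case via the pointwise bound $F(\lambda)\lesssim\lambda^{-\alpha}$ fills in details the paper leaves implicit, and is consistent with the paper's definition of $F$.
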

\begin{proof}
Set $\Lambda=t^{-1}$, and rewrite (\ref{eq:small xi}) as,
\[
\int_{1}^{\Lambda}\lambda^{2\sigma p'}F(\lambda)^{p'}\lambda^{n-1}d\lambda\leq C\,\Lambda^{(2\sigma-\alpha)p'}.
\]
Take $\beta<p'$ and let
\[
\phi(\Lambda)=\int_{1}^{\Lambda}\left[\lambda^{2\sigma}F(\lambda)\right]^{\beta}\lambda^{n-1}d\lambda.
\]
Applying Holder's inequality we deduce ($C$ designates a generic
constant which may change from expression to expression):
\begin{align*}
\phi(\Lambda) & =\int_{1}^{\Lambda}\left[\lambda^{2\sigma}F(\lambda)\right]^{\beta}\lambda^{(n-1)\frac{\beta}{p'}}\lambda^{(n-1)(1-\frac{\beta}{p'})}d\lambda\\
 & \leq C\left(\int_{1}^{\Lambda}\lambda^{2\sigma p'}F(\lambda)^{p'}\lambda^{n-1}d\lambda\right)^{\beta/p'}\left(\int_{1}^{\Lambda}\lambda^{n-1}d\lambda\right)^{1-\beta/p'}\\
 & \leq C\,\Lambda^{2\sigma\beta-\alpha\beta}\left(1+\Lambda^{n}\right)^{1-\frac{\beta}{p'}}\\
 & =O\left(\Lambda^{2\sigma\beta+n-\alpha\beta-\frac{n\beta}{p'}}\right).
\end{align*}
Integration by parts yields the identity,
\begin{align*}
\int_{1}^{\Lambda}F(\lambda)^{\beta}\lambda^{n-1}d\lambda & =\int_{1}^{\Lambda}\lambda^{-2\sigma\beta}\phi'(\lambda)d\lambda\\
 & =\Lambda^{-2\sigma\beta}\phi(\Lambda)+2\beta\sigma\int_{1}^{\Lambda}\lambda^{-2\sigma\beta-1}\phi(\lambda)d\lambda.
\end{align*}
The first term can be estimated from the estimate of $\phi$; for
the second term we have
\[
\int_{1}^{\Lambda}\lambda^{-2\sigma\beta-1}\phi(\lambda)d\lambda\leq C\,\int_{1}^{\Lambda}\lambda^{-\alpha\beta+n-\frac{n\beta}{p'}-1}d\lambda=O(1+\Lambda^{n-\alpha\beta-\frac{n\beta}{p'}}).
\]
Putting the estimates together and expressing the estimate in terms
of $p$ yields
\[
\int_{1}^{\Lambda}F(\lambda)^{\beta}\lambda^{n-1}d\lambda=O(1+\Lambda^{n-\alpha\beta-n\beta+\frac{n\beta}{p}})
\]
and it follows that $F\in L^{\beta}((0,\infty),\lambda^{n-1}d\lambda)$
provided $n-\alpha\beta-n\beta+\frac{n\beta}{p}<0$. The proof is
complete as the conditions on $\beta$ are equivalent to (\ref{eq:integrab-est}).
\end{proof}
The role of $\sigma$ in this result simply specifies the possible
range for the Lipschitz order $\alpha$ and the latter limits the
range on $\beta$, in particular the lower bound. The following corollary
gives the full higher dimensional generalization of $L^{1}-$integrability
part of Theorem \ref{prop:B-1d-int}.
\begin{corollary}
\label{cor:ae FT}Let $1\leq p\leq2$ and suppose $\mu\in\mathcal{K}_{\sigma}$
for $2\sigma>\frac{n}{p}$. If $f\in L^{p}(\mathbb{R}^{n})$ and $\Vert M_{\mu}^{t}f-f\Vert_{p}=O(t^{\alpha})$
for some $\frac{n}{p}<\alpha\leq2\sigma$, then $\widehat{f}\in L^{1}(\mathbb{R}^{n})$
and Fourier inversion holds a.e.
\end{corollary}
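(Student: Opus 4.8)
The plan is to deduce Corollary \ref{cor:ae FT} directly from Proposition \ref{prop:integrab result} together with H\"older's inequality, just as Proposition \ref{prop:B-1d-int} follows in the one-dimensional case. First I would observe that the hypotheses of Corollary \ref{cor:ae FT} are exactly those of Proposition \ref{prop:integrab result} with the additional strengthening $\alpha>\frac{n}{p}$ (which is consistent with $\alpha\le 2\sigma$ precisely because we assumed $2\sigma>\frac{n}{p}$). So the conclusion of the proposition applies: $F\in L^{\beta}((0,\infty),\lambda^{n-1}d\lambda)$ for every $\beta$ in the range $\frac{np}{np+\alpha p-n}<\beta\le p'$. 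The crux is then to check that the condition $\alpha>\frac{n}{p}$ forces the lower endpoint $\frac{np}{np+\alpha p-n}$ to drop strictly below $1$, so that $\beta=1$ is an admissible choice; indeed, $\frac{np}{np+\alpha p-n}<1 \iff np<np+\alpha p-n \iff \alpha p>n \iff \alpha>\frac{n}{p}$. (One should also note $1\le p'$ always, so the full range $[\,\text{something}<1\,]\ni 1\le p'$ is nonempty.) Hence $F\in L^{1}((0,\infty),\lambda^{n-1}d\lambda)$.

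Next I would pass from integrability of $F$ to integrability of $\widehat f$ over $\mathbb{R}^n$. Writing the integral in polar coordinates,
\[
\int_{\mathbb{R}^n}|\widehat f(\xi)|\,d\xi=\int_0^\infty\!\!\int_{S^{n-1}}|\widehat f(\lambda\omega)|\,d\omega\,\lambda^{n-1}d\lambda.
\]
For $p=1$ the inner integral is bounded by $\omega_{n-1}F(\lambda)$ directly from the definition $F(\lambda)=\max_\omega|\widehat f(\lambda\omega)|$. For $1<p\le 2$ one applies H\"older's inequality on the sphere with exponents $p'$ and $p$: the inner integral is at most $\omega_{n-1}^{1/p}\big(\int_{S^{n-1}}|\widehat f(\lambda\omega)|^{p'}d\omega\big)^{1/p'}=\omega_{n-1}^{1/p}F(\lambda)$. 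Either way,
\[
\int_{\mathbb{R}^n}|\widehat f(\xi)|\,d\xi\le \omega_{n-1}^{1/p}\int_0^\infty F(\lambda)\lambda^{n-1}d\lambda<\infty,
\]
so $\widehat f\in L^1(\mathbb{R}^n)$.

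Finally, once $f\in L^p(\mathbb{R}^n)$ with $1\le p\le 2$ and $\widehat f\in L^1(\mathbb{R}^n)$, the Fourier inversion integral $\int_{\mathbb{R}^n}\widehat f(\xi)e^{i x\cdot\xi}\,d\xi$ converges absolutely and defines a continuous (indeed $C_0$, by Riemann--Lebesgue applied to $\widehat f$) function $g$ on $\mathbb{R}^n$; standard Fourier theory (approximate identities, or uniqueness of the Fourier transform on $L^1+L^2$) shows $g=f$ a.e., which is the assertion that Fourier inversion holds a.e. as explained in the footnote. I do not anticipate a genuine obstacle here; the only point requiring care is the endpoint arithmetic showing that $\alpha>\frac np$ is exactly what makes $\beta=1$ fall inside the admissible range of Proposition \ref{prop:integrab result}, and the harmless check that $F$ is well defined and measurable, which is already recorded in the discussion preceding that proposition.
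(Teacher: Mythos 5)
Your proposal is correct and follows essentially the same route as the paper: the paper likewise observes that $\alpha>\frac{n}{p}$ makes the lower endpoint $\frac{np}{np+\alpha p-n}$ drop below $1$, so $F\in L^{1}((0,\infty),\lambda^{n-1}d\lambda)$, and then passes to $\widehat{f}\in L^{1}(\mathbb{R}^{n})$ via the embedding of $L^{p'}(S^{n-1})$ into $L^{1}(S^{n-1})$ on the compact sphere. Your endpoint arithmetic and the H\"older step on the sphere are exactly the details the paper leaves implicit.
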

The proof of this is simply to observe that under the stated conditions,
the lower limit for $\beta$ in (\ref{eq:integrab-est}) is less than
one. Hence $F\in L^{1}((0,\infty),\lambda^{n-1}d\lambda)$ and the
result follows since $\Vert\widehat{f}(\lambda\cdot)\Vert_{L^{1}(S^{n-1})}\leq\omega_{n-1}\Vert\widehat{f}(\lambda\cdot)\Vert_{L^{p}(S^{n-1})}$.

Explicit realizations of this corollary would be with the measures
defined in Propositions \ref{prop:binomial powers} and \ref{prop:DD}
for with $2l>\frac{n}{p}$. As stated in Remark \ref{connection-with-approximation},
examples of these measures and associated operators are connected
with higher order smoothness. Hence the implication from the above
corollary is that dimension dependent higher order smoothness conditions
are needed to achieve a.e. Fourier inversion. This is in line with
other works, e.g., Pinsky's work on Fourier inversion at a point \cite{Pinsky1,Pinsky2}.

The following example indicates that the range for $\beta$ in Proposition
\ref{prop:integrab result} is best possible; this example is a based
on one used by Titchmarsh \cite{T} in one dimension.
\begin{example}
Let $\mu$ be a surface measure satisfying the hypothesis of Proposition
\ref{prop:surf-meas}. Let $1<p\leq2$ and take $\frac{n}{p}-1<\gamma<\frac{n}{p}$.
Consider the radial function
\[
f(x)=\frac{1}{|x|^{\gamma}+|x|^{n}}.
\]
Then $f\in L^{p}(\mathbb{R}^{n})$. \textbf{Claim:} $\Vert M_{\mu}^{t}f-f\Vert_{p}=O\left(t^{\frac{n}{p}-\gamma}\right)$.
Once the claim is proved, it follows that $F\in L^{\beta}((0,\infty),\lambda^{n-1}d\lambda)$
for $\frac{n}{n-\gamma}<\beta\leq p'$. On the other hand, as $f$
is a radial function (and abusing notation), $F(\lambda)=\widehat{f}(\lambda)$
and since $f(x)\sim|x|^{-\gamma}$ for $x\rightarrow0$, it follows
that $F(\lambda)\sim\lambda^{\gamma-n}$ as $\lambda\rightarrow\infty$.
Thus $F(\lambda)^{\frac{n}{n-\gamma}}\lambda^{n-1}\sim\lambda^{-1}$
and $F\notin L^{\frac{n}{n-\gamma}}((0,\infty),\lambda^{n-1}d\lambda)$.\\
In order to prove the claim, let $d$ be the diameter of the compact
set $K$, let $t<1/d$, and consider $x$ such that $|x|>dt$. Using
the mean value theorem, $\vert M_{\mu}^{t}f(x)-f(x)\vert\leq t\vert\mathbf{\nabla}f(x)\vert$.
Write $f(x)=\tilde{f}(r)$, $r=|x|$, then
\[
\vert\mathbf{\nabla}f(x)\vert=\vert\tilde{f}'(r)\vert=\frac{\gamma r^{\gamma-1}+nr^{n-1}}{\left(r^{\gamma}+r^{n}\right)^{2}}.
\]
It follows that
\[
\int_{|x|>dt}\vert M_{\mu}^{t}f(x)-f(x)\vert^{p}dx\leq\omega_{n-1}t^{p}\int_{dt}^{\infty}\vert\tilde{f}'(r)\vert^{p}r^{n-1}dr=I_{1}(t)+I_{2},
\]
where $I_{1}(t)$ is the integral over $[dt,1]$ and $I_{2}$ over
the interval $[1,\infty)$. Then letting $C$ denote a generic constant,
not necessarily the same in each occurrence, we have
\begin{eqnarray*}
I_{1}(t) & \leq & C\int_{dt}^{1}r^{-p\gamma-p-n-1}dr=O\left(1+t^{n-p\gamma-p}\right)\\
I_{2} & \leq & C\int_{1}^{\infty}r^{n-np-p-1}dr=O(1),
\end{eqnarray*}
the latter because $n-np-p<0$. Putting the pieces together yields
the estimate
\[
\left(\int_{|x|>dt}\vert M_{\mu}^{t}f(x)-f(x)\vert^{p}dx\right)^{1/p}=O\left(t^{p}+t^{\frac{n}{p}-\gamma}\right)=O\left(t^{\frac{n}{p}-\gamma}\right),
\]
the last estimate as $\frac{n}{p}-1<\gamma<\frac{n}{p}$. To finish
the proof of the claim, consider the estimate
\begin{multline*}
\left(\int_{|x|\leq dt}\vert M_{\mu}^{t}f(x)-f(x)\vert^{p}dx\right)^{1/p}\leq\left(\int_{|x|\leq dt}\vert M_{\mu}^{t}f(x)\vert^{p}dx\right)^{1/p}+\left(\int_{|x|\leq dt}\vert f(x)\vert^{p}dx\right)^{1/p}\\
=J_{1}(t)+J_{2}(t).
\end{multline*}
Straightforward estimates show that $J_{2}(t)=O\left(t^{\frac{n}{p}-\gamma}\right)$
as $t\rightarrow0$. To estimate the first piece we use Minkowski's
inequality to write:
\[
J_{2}(t)\leq\int_{S}\left(\int_{|x|\leq dt}\vert f(x-ty)\vert^{p}dx\right)^{1/p}d\mu(y)=\int_{S}\left(\int_{|z-ty|\leq dt}\vert f(z)\vert^{p}dz\right)^{1/p}d\mu(y).
\]
Now $\{z\,:\,|z-ty|\leq dt\}\subset\{z\,:\,|z|\leq2dt\}$ and hence
\[
J_{2}(t)\leq\int_{S}\left(\int_{|z|\leq2dt}\vert f(z)\vert^{p}dz\right)^{1/p}d\mu(y)=O\left(t^{\frac{n}{p}-\gamma}\right).
\]
This completes the proof of the claim.
\end{example}

\section{Further Results}

In this section two variations on the results presented above for
the Euclidean Fourier transform are given. The first lies outside
the realm of multipliers used above and concerns a variant for weak
solutions of the Cauchy problem for the wave equation. Secondly, transference
results are used to push the results of section 2 into the realm of
the $n-$dimensional torus.

\subsection{The Cauchy problem for the wave equation}

Let $f\in L^{2}(\mathbb{R}^{n})$ and let $u(x,t)$ be the weak solution
of the Cauchy problem
\begin{align*}
\text{PDE: } & u_{tt}=\Delta u,\, x\in\mathbb{R}^{n},\, t>0\\
\text{IC: } & \left\{ \begin{array}{c}
u(x,0)=0\\
u_{t}(x,0)=f(x)
\end{array},\, x\in\mathbb{R}^{n}\right.
\end{align*}
On the Fourier transform side, $\widehat{u}(\xi,t)=\frac{\sin t|\xi|}{|\xi|}\widehat{f}(\xi)$.
Since $\mathcal{W}(t|\xi|)=\frac{\sin t|\xi|}{t|\xi|}$ is a bounded
function, it forms a bounded multiplier operator on $L^{2}(\mathbb{R}^{n})$.
Moreover, it is easily shown that
\[
\left\Vert \frac{u(\cdot,t)}{t}-f(\cdot)\right\Vert _{2}\rightarrow0,\, t\rightarrow0.
\]
Note that $\mathcal{W}(r)=j_{1/2}(r)$, and hence $|1-\mathcal{W}(r)|\asymp\min\{1,r^{2}\}$.
Using the techniques from section 2 leads to the following proposition.
\begin{proposition}
Let $f\in L^{2}(\mathbb{R}^{n})$ and let $u(x,t)$ be the weak solution
of the Cauchy problem above. Then
\[
\left[\int_{\mathbb{R}^{n}}\min\{1,(t|\xi|)^{4}\}\vert\widehat{f}(\xi)\vert^{2}d\xi\right]^{1/2}\asymp\left\Vert \frac{u(\cdot,t)}{t}-f(\cdot)\right\Vert _{2},
\]
and moreover
\[
\left\Vert \frac{u(\cdot,t)}{t}-f(\cdot)\right\Vert _{2}\asymp\left\Vert M^{t}f(\cdot)-f(\cdot)\right\Vert _{2}.
\]

\end{proposition}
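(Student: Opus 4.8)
The plan is to treat both equivalences as instances of the multiplier machinery from Section 2, using the explicit form of the wave propagator on the Fourier side. The key observation, already noted in the excerpt, is that $\widehat{u}(\xi,t)/t = \mathcal{W}(t|\xi|)\widehat{f}(\xi)$ with $\mathcal{W}(r)=\tfrac{\sin r}{r}=j_{1/2}(r)$, so that the operator $f\mapsto u(\cdot,t)/t$ is exactly a Fourier multiplier operator with symbol $j_{1/2}(t|\xi|)$. This is the symbol attached to the spherical mean operator in dimension $n=3$ (where $\nu=\tfrac{n-2}{2}=\tfrac12$), but here we are free to use it in all dimensions since it is being applied only as an abstract $L^2$ multiplier.

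First I would establish the first equivalence. Writing $\widehat{(u(\cdot,t)/t - f)}(\xi) = [\mathcal{W}(t|\xi|)-1]\widehat{f}(\xi)$ and invoking the Plancherel theorem, one gets
\[
\left\Vert \frac{u(\cdot,t)}{t}-f(\cdot)\right\Vert_2^2 = (2\pi)^{-n}\int_{\mathbb{R}^n}|1-\mathcal{W}(t|\xi|)|^2\,|\widehat{f}(\xi)|^2\,d\xi
\]
(up to the normalization constant from the chosen Fourier transform convention). Then one inserts the estimate $|1-\mathcal{W}(r)| = |1-j_{1/2}(r)|\asymp\min\{1,r^2\}$, which follows from the general estimate $1-j_\alpha(\lambda)\asymp\min\{1,\lambda^2\}$ recorded after (\ref{eq:mehler}) with $\alpha=1/2>-1/2$. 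Squaring gives $|1-\mathcal{W}(t|\xi|)|^2\asymp\min\{1,(t|\xi|)^4\}$, and the first $\asymp$ drops out. In the language of Section 2 this is just the observation that $\mathcal{W}$ is the symbol of a measure in $\mathcal{K}_1$, so Theorem \ref{thm:gen-estimates}(3) applies directly — indeed $\mathcal{W}(t|\xi|)=\widehat{\mu_t}(\xi)$ for $\mu$ the normalized surface measure on $S^2\subset\mathbb{R}^3$ pushed through, but it is cleaner just to cite the $j_{1/2}$ estimate.

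For the second equivalence, both $\Vert u(\cdot,t)/t - f\Vert_2$ and $\Vert M^tf - f\Vert_2$ are, by the same Plancherel computation together with Example \ref{sph-mean-examp}, comparable to the same quantity $\big(\int \min\{1,(t|\xi|)^4\}|\widehat f(\xi)|^2 d\xi\big)^{1/2}$: the spherical mean operator $M^t$ has symbol $j_\nu(t|\xi|)$ with $\nu=\tfrac{n-2}{2}$, and $|1-j_\nu(r)|^2\asymp\min\{1,r^4\}$ by the same estimate with $\alpha=\nu>-1/2$ (valid for $n\geq2$). Hence both norms are $\asymp$ to a common expression, and chaining the two $\asymp$ relations gives the claim. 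I would also remark in passing that the convergence statement $\Vert u(\cdot,t)/t - f\Vert_2\to 0$ is consistent with this (it follows from dominated convergence since $\min\{1,(t|\xi|)^4\}\to 0$ pointwise and is bounded by $1\in L^\infty$ against the integrable density $|\widehat f|^2$). The only mild obstacle is bookkeeping with the Fourier normalization constant $(2\pi)^{-n}$ in Plancherel and checking that the $\asymp$ constants are genuinely independent of $t$ and $f$ — but since they come from the fixed function-of-one-variable estimate $1-j_\alpha(r)\asymp\min\{1,r^2\}$ applied at the scaled point $r=t|\xi|$, that independence is automatic. There is no substantive difficulty here; the proposition is essentially a restatement of Theorem \ref{thm:gen-estimates}(3) for two specific members of $\mathcal{K}_1$.
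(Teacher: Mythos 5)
Your proof is correct and follows essentially the same route as the paper: the first equivalence is Plancherel combined with $1-j_{1/2}(r)\asymp\min\{1,r^{2}\}$ (so the wave propagator plays the role of a $\mathcal{K}_{1}$-type multiplier), and the second equivalence is obtained by chaining this with the $p=2$ case of Theorem \ref{BP-thm} for the spherical mean operator. No gaps.
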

The second conclusion follows from the first and the $L^{2}-$result
in Theorem \ref{BP-thm}; see also Example \ref{approx-ident-examp}.
\begin{remark}
In the case $n=3$, $\frac{u(x,t)}{t}=M^{t}f(x)$, the spherical mean
operator. This suggests that the proposition should have an generalization
to other $L^{p}-$spaces. The difficulty is that the function $\mathcal{W}(t|\xi|)$
is not a bounded multiplier on all $L^{p}(\mathbb{R}^{n})$. Rather,
by embedding the spherical mean operator into an analytic family of
operators, Stein \cite{S1} showed for $n\geq4$ and $\frac{2n}{n+1}<p\leq2$,
$\mathcal{W}(t|\xi|)$ is a bounded multiplier on $L^{p}(\mathbb{R}^{n})$
and moreover
\[
\left\Vert \frac{u(\cdot,t)}{t}-f(\cdot)\right\Vert _{p}\rightarrow0,\, t\rightarrow0.
\]
Under these conditions we then obtain the following inequality
\[
\left[\int_{\mathbb{R}^{n}}\min\{1,(t|\xi|)^{2p'}\}\vert\widehat{f}(\xi)\vert^{p'}d\xi\right]^{1/p'}\leq c_{p}\left\Vert \frac{u(\cdot,t)}{t}-f(\cdot)\right\Vert _{p},
\]
for some positive constant $c_{p}$. This result is also valid for
$n=1,2,3$ without restriction on $p$, i.e.. for $1\leq p\leq2$.
\end{remark}

\subsection{Transference to the torus}

In this section the use of the hat notation $\widehat{f}$ will be
used to denote Fourier coefficients as well as Fourier transforms;
the meaning should be clear from context. Let $\mathbb{T}$ be the
unit circle in the plane, $f\in L^{p}(\mathbb{T})$, and consider
the following modulus of continuity:
\[
\omega_{p}[f](t)=\sup_{0<h<t}\Vert f(\cdot+h)+f(\cdot-h)-2f(\cdot)\Vert_{L^{p}(\mathbb{T})}.
\]
The analog of Theorem \ref{thm:BP-1d} in the context of Fourier series
takes the following form.
\begin{theorem}
Let $1\leq p\leq2$. Then there is a constant $c_{p}>0$ such that
for all $f\in L^{p}(\mathbb{T})$ :
\begin{itemize}
\item when $p=1$,
\[
\sup_{k\in\mathbb{Z}}\left[\min\{1,(t|k|)^{2}\}\vert\widehat{f}(k)\vert\right]\leq c_{1}\omega_{1}[f](t);
\]

\item when $1<p<2$,
\[
\left[\sum_{k\in\mathbb{Z}}\min\{1,(t|k|)^{2p'}\}\vert\widehat{f}(k)\vert^{p'}\right]^{1/p'}\leq c_{p}\omega_{p}[f](t);
\]

\item when $p=2$,
\[
\left[\sum_{k\in\mathbb{Z}}\min\{1,(t|k|)^{4}\}\vert\widehat{f}(k)\vert^{2}\right]^{1/2}\asymp\omega_{2}[f](t).
\]

\end{itemize}
\end{theorem}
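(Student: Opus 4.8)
The plan is to deduce this theorem from Theorem \ref{thm:gen-estimates} (with $\sigma=1$, $n=1$ in the Euclidean setting) via a standard transference argument. The key observation is that the symmetric second difference on $\mathbb{T}$ is exactly the operator $I-M_\mu^t$ where $\mu = \tfrac12(\delta_1+\delta_{-1})$ is the average of two point masses: indeed $M_\mu^t f(x) = \tfrac12\bigl(f(x+t)+f(x-t)\bigr)$, so that $2(f - M_\mu^t f)(x) = 2f(x)-f(x+t)-f(x-t)$, and the corresponding multiplier is $\widehat\mu(\xi) = \cos\xi$. Since $|1-\cos\xi|\asymp\min\{1,\xi^2\}$ (elementary, or from \eqref{eq:mehler} with $\alpha=1/2$ since $j_{1/2}(\xi) = \sin\xi/\xi$ is not quite this—rather one checks $1-\cos\xi$ directly), we have $\mu\in\mathcal{K}_1(\mathbb{R})$. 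The Fourier-series multiplier identity reads $\widehat{(I-M_\mu^t)f}(k) = (1-\cos(tk))\widehat f(k)$ for $k\in\mathbb{Z}$.

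First I would set up the transference. For $f\in L^p(\mathbb{T})$, the Hausdorff–Young inequality on the torus gives, for $1<p\le 2$,
\[
\Bigl[\sum_{k\in\mathbb{Z}}|1-\cos(tk)|^{p'}\,|\widehat f(k)|^{p'}\Bigr]^{1/p'} \le c_p\,\bigl\|(I-M_\mu^t)f\bigr\|_{L^p(\mathbb{T})},
\]
and then the estimate $|1-\cos(tk)|^{p'}\asymp\min\{1,(tk)^{2p'}\}$ (valid because $|1-\cos u|\asymp\min\{1,u^2\}$ for all real $u$, using periodicity and the bound near $0$) yields the stated inequality once we bound $\|(I-M_\mu^t)f\|_{L^p(\mathbb{T})}$ by $\omega_p[f](t)$. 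But that last bound is immediate: taking $h=t$ in the supremum defining $\omega_p[f](t)$ gives $\|f(\cdot+t)+f(\cdot-t)-2f(\cdot)\|_{L^p(\mathbb{T})}\le\omega_p[f](t)$, and this is precisely $2\|(I-M_\mu^t)f\|_{L^p(\mathbb{T})}$ up to the harmless factor of $2$ absorbed into $c_p$. The case $p=1$ uses $\|\widehat g\|_{\ell^\infty(\mathbb{Z})}\le\|g\|_{L^1(\mathbb{T})}$ in place of Hausdorff–Young, and the case $p=2$ uses Parseval on $\mathbb{T}$, giving the two-sided equivalence $\asymp$: the lower bound follows since $\|(I-M_\mu^t)f\|_{L^2(\mathbb{T})}^2 = \sum_k|1-\cos(tk)|^2|\widehat f(k)|^2\asymp\sum_k\min\{1,(tk)^4\}|\widehat f(k)|^2$, and conversely $\omega_2[f](t) = \sup_{0<h<t}\bigl(\sum_k|1-\cos(hk)|^2\cdot 4|\widehat f(k)|^2\bigr)^{1/2}$, with each term $|1-\cos(hk)|^2\le\min\{1,(tk)^4\}$ times an absolute constant for $h<t$.

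The one genuine subtlety—rather than obstacle—is the handling of the supremum over $h$ in $\omega_p[f](t)$ for the $p=2$ upper bound: I need $|1-\cos(hk)|\le C\min\{1,(tk)^2\}$ uniformly for $0<h<t$ and all $k$. This holds because $|1-\cos(hk)|\le 2$ always and $|1-\cos(hk)|\le\tfrac12(hk)^2\le\tfrac12(tk)^2$ when $|hk|\le\pi/2$ say; combining, $|1-\cos(hk)|\le C\min\{1,(hk)^2\}\le C\min\{1,(tk)^2\}$. Everything else is a direct translation of Theorem \ref{thm:gen-estimates}'s proof with integrals replaced by sums and $\mathbb{R}^n$ replaced by $\mathbb{T}$, so no new ideas are required; one may alternatively phrase the argument through de Leeuw-type transference of multipliers from $\mathbb{R}$ to $\mathbb{Z}$, but the self-contained verification above is cleaner since the multiplier $1-\cos(tk)$ arises directly on $\mathbb{T}$.
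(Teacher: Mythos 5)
There is a genuine gap, and it sits at the heart of your argument: the claimed two-sided estimate $|1-\cos u|\asymp\min\{1,u^{2}\}$ for all real $u$ is false. The upper bound $|1-\cos u|\le\min\{2,u^{2}/2\}$ is fine, but the lower bound fails completely at every $u\in2\pi\mathbb{Z}\setminus\{0\}$, where $1-\cos u=0$ while $\min\{1,u^{2}\}=1$; periodicity is precisely what kills the estimate, not what saves it. Concretely, for $t=0.1$ and $k=63$ one has $1-\cos(tk)\approx1.4\times10^{-4}$ but $\min\{1,(tk)^{2}\}=1$. Since your deduction of the displayed inequalities from Hausdorff--Young requires exactly this lower bound (you need $\min\{1,(tk)^{2p'}\}\le C\,|1-\cos(tk)|^{p'}$ to pass from $\sum_{k}|1-\cos(tk)|^{p'}|\widehat f(k)|^{p'}$ to $\sum_{k}\min\{1,(t|k|)^{2p'}\}|\widehat f(k)|^{p'}$), the proof as written does not establish the stated bounds. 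Note also that this is why the measure $\mu=\tfrac12(\delta_{1}+\delta_{-1})$ does \emph{not} belong to $\mathcal{K}_{1}$ in the sense of (\ref{eq:meas-estimate}), so Theorem \ref{thm:gen-estimates} cannot simply be transferred with this $\mu$; the parts of your write-up that do go through (the $p=2$ upper bound for $\omega_{2}$, and the trivial bound $\|(I-M_{\mu}^{t})f\|_{p}\le\tfrac12\omega_{p}[f](t)$) only use the harmless upper bound on $1-\cos$.

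The missing idea is that the supremum over $0<h<t$ in $\omega_{p}[f](t)$ must be exploited in an essential way, exactly as in the proof of Theorem \ref{thm:BP-1d} in \cite{BP1}. For $p=1$ one may choose $h=h(k)<t$ separately for each frequency (e.g.\ $h|k|=1$ when $t|k|>1$) so that $1-\cos(hk)\ge c\min\{1,(tk)^{2}\}$, and the $\sup_{k}$ estimate follows. For $1<p\le2$ no single $h$ works for all $k$ simultaneously, and the standard remedy is to average: apply Hausdorff--Young for each $h\in(0,t)$, raise to the power $p'$, integrate $\tfrac1t\int_{0}^{t}(\cdot)\,dh$, interchange sum and integral, and use
\[
\frac{1}{t}\int_{0}^{t}\sin^{2p'}\!\Bigl(\frac{hk}{2}\Bigr)\,dh\;\ge\;c\,\min\{1,(t|k|)^{2p'}\},
\]
which holds because the left side is $\asymp(t|k|)^{2p'}$ when $t|k|\le1$ and is bounded below by a positive constant (the mean of $\sin^{2p'}$ over an interval containing at least a fixed fraction of a period) when $t|k|>1$. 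With this step inserted your argument closes; without it, the passage from $|1-\cos(tk)|$ to $\min\{1,(tk)^{2}\}$ is simply wrong.
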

The proof of this follows the ideas/methods given in section 2. Analogous
to Proposition \ref{prop:B-1d-int}, the following corollary can be
proved adapting the ideas of section 3.
\begin{corollary}
\label{cor:szasz}Let $1\leq p\leq2$ and $f\in L^{p}(\mathbb{T})$
such that for some $0<\alpha\leq2$,
\[
\Vert f(\cdot+t)+f(\cdot-t)-2f(\cdot)\Vert_{L^{p}(\mathbb{T})}=O(t^{\alpha}).
\]
Then $\sum_{k}\vert\widehat{f}(k)\vert^{\beta}$ converges for $\frac{p}{p+\alpha p-1}<\beta\leq p'$.
Moreover, if the above Lipschitz condition holds for some $\alpha>1/p$,
then the Fourier series of $f$ converges absolutely and uniformly
on $\mathbb{T}$.
\end{corollary}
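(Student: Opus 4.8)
The plan is to run the one-dimensional torus analog of the argument used to prove Proposition~\ref{prop:integrab result}, with sums replacing integrals over $\lambda$. Since the relevant multiplier here is $1-\cos(tk)\asymp\min\{1,(tk)^2\}$ (the second symmetric difference on $\mathbb{T}$ has Fourier side $2(\cos(tk)-1)\widehat{f}(k)$), the ambient theorem stated just above gives, for $1<p\le2$ and $0<t\le 1$,
\[
\sum_{|k|<1/t}(t|k|)^{2p'}\,\vert\widehat{f}(k)\vert^{p'}\le c_p^{p'}\,\omega_p[f](t)^{p'}=O\bigl(t^{(\alpha-2)(-p')}\bigr),
\]
i.e. with $\Lambda=1/t$ running over positive integers (or reals, by monotonicity),
\[
\sum_{1\le |k|\le\Lambda}|k|^{2p'}\vert\widehat f(k)\vert^{p'}\le C\,\Lambda^{(2-\alpha)p'}.
\]
For $p=1$ one uses instead $\sup_{|k|<1/t}(t|k|)^2\vert\widehat f(k)\vert\le c_1\omega_1[f](t)$, so $\vert\widehat f(k)\vert\le C|k|^{-\alpha}$, and $\sum_k|k|^{-\alpha\beta}<\infty$ iff $\alpha\beta>1$, i.e. $\beta>1/\alpha=p/(p+\alpha p-1)$ with $p=1$; this is the easy endpoint case so I would dispatch it first and then focus on $1<p\le2$.

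For $1<p\le2$, the key steps mirror Proposition~\ref{prop:integrab result} with $n=1$. First, fix $1\le\beta<p'$ and set $\phi(\Lambda)=\sum_{1\le|k|\le\Lambda}(|k|^{2}\vert\widehat f(k)\vert)^{\beta}$. Apply H\"older's inequality in the form $\sum_k a_k=\sum_k a_k\cdot 1$ with exponents $p'/\beta$ and its conjugate, splitting off $\sum_{1\le|k|\le\Lambda}1=O(\Lambda)$, to get
\[
\phi(\Lambda)\le C\Bigl(\sum_{1\le|k|\le\Lambda}|k|^{2p'}\vert\widehat f(k)\vert^{p'}\Bigr)^{\beta/p'}\bigl(O(\Lambda)\bigr)^{1-\beta/p'}=O\bigl(\Lambda^{(2-\alpha)\beta+1-\beta/p'}\bigr).
\]
Next perform the discrete analog of the integration-by-parts step: write $\vert\widehat f(k)\vert^{\beta}=|k|^{-2\beta}\bigl(\phi(|k|)-\phi(|k|-1)\bigr)$ and use Abel summation (summation by parts) to express $\sum_{1\le|k|\le\Lambda}\vert\widehat f(k)\vert^{\beta}$ as $\Lambda^{-2\beta}\phi(\Lambda)+2\beta\sum_{1\le j\le\Lambda}j^{-2\beta-1}\phi(j)+O(\text{boundary})$, then substitute the bound on $\phi$. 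The resulting sum $\sum_j j^{-2\beta-1}\cdot j^{(2-\alpha)\beta+1-\beta/p'}=\sum_j j^{-\alpha\beta-\beta/p'-1+1}=\sum_j j^{-\alpha\beta-\beta/p'-1}$ (plus the $\Lambda^{-2\beta}\phi(\Lambda)=O(\Lambda^{-\alpha\beta+1-\beta/p'})$ term) is bounded uniformly in $\Lambda$ precisely when $-\alpha\beta+1-\beta/p'<0$. Rewriting $1/p'=1-1/p$, this is $1-\alpha\beta-\beta+\beta/p<0$, which with $n=1$ matches the exponent condition in Proposition~\ref{prop:integrab result} and is equivalent to $\beta>\frac{p}{p+\alpha p-1}$. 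Hence $\sum_k\vert\widehat f(k)\vert^{\beta}<\infty$ for $\frac{p}{p+\alpha p-1}<\beta<p'$, and the endpoint $\beta=p'$ follows directly from the Hausdorff--Young piece of the theorem since $\sum_k(t|k|)^{2p'}\vert\widehat f(k)\vert^{p'}$ already controls the unweighted tail $\sum_{|k|\ge1/t}\vert\widehat f(k)\vert^{p'}$ (take $t\to0$).

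The absolute and uniform convergence statement then follows by taking $\beta=1$, which is admissible exactly when $\frac{p}{p+\alpha p-1}<1$, i.e. $\alpha p>1$, i.e. $\alpha>1/p$ --- whereupon $\sum_k\vert\widehat f(k)\vert<\infty$, the Fourier series converges absolutely, and uniform convergence on $\mathbb{T}$ is immediate from the Weierstrass $M$-test since $|\widehat f(k)e^{ikx}|=\vert\widehat f(k)\vert$. The main obstacle, such as it is, is purely technical: replacing the clean integration-by-parts identity of the continuous proof by a careful Abel-summation argument and tracking the boundary terms and the $|k|=1$ contribution so that no spurious divergences are introduced; the exponent bookkeeping is otherwise identical to the $n=1$ case of Proposition~\ref{prop:integrab result}. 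One minor point worth stating explicitly is that the $L^p(\mathbb{T})$ modulus is bounded ($\omega_p[f](t)\le 4\Vert f\Vert_{L^p(\mathbb{T})}$), so the hypothesis $\omega_p[f](t)=O(t^\alpha)$ is only a constraint as $t\to0$ and is compatible with any $\alpha\le2$, which is why the stated range $0<\alpha\le2$ is the natural one (as in the continuous case, $\alpha>2$ forces $f$ constant).
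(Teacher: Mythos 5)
Your proposal is correct and follows essentially the same route the paper intends: the paper proves this by "adapting the ideas of section 3," i.e.\ the H\"older-plus-summation-by-parts argument it writes out in full for the $n$-dimensional torus version (Corollary \ref{cor:ae FS}), which is exactly your Abel-summation scheme specialized to $n=1$, together with the same endpoint and $p=1$ observations. The only blemishes are typographical (the intermediate ``$O(t^{(\alpha-2)(-p')})$'' for $\omega_p[f](t)^{p'}$ and a stray $-1$ in the exponent of $\sum_j j^{-\alpha\beta-\beta/p'}$), neither of which affects the displayed bounds or the final exponent condition $1-\alpha\beta-\beta/p'<0$.
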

Looking toward extensions to higher dimension, let $\mathbb{T}^{n}=\mathbb{R}^{n}/\mathbb{Z}^{n}$
be the torus in $n-$dimensions. The measures $\mu\in\mathcal{K}_{\sigma}(\mathbb{R}^{n})$
define Fourier multipliers on $L^{p}(\mathbb{R}^{n})$ and can be
transferred to Fourier multipliers on $L^{p}(\mathbb{T}^{n})$ under
mild regularity conditions on $\widehat{\mu}$ (see \cite[Chapter 3]{Graf}).
Specifically, if $\widehat{\mu}$ is continuous on $\mathbb{R}^{n}$
(all of the examples given in this paper satisfy this condition),
then the series, 
\[
\sum_{k\in\mathbb{Z}^{n}}\widehat{\mu}(k)e^{ik\cdot x},
\]
is convergent to a measure, also denoted $\mu$ on $\mathbb{T}^{n}$.
This class of transferred measures/multipliers is denoted $\mathcal{K}_{\sigma}(\mathbb{T}^{n})$.
Further, $\widehat{\mu}(t\, k)$, $t>0$ and $k\in\mathbb{Z}^{n}$,
defines a Fourier multiplier operator on $L^{p}(\mathbb{T}{}^{n})$.
The corresponding operator, also denoted $M_{\mu}^{t}$ is given by
\[
M_{\mu}^{t}f(x)=\sum_{k\in\mathbb{Z}^{n}}\widehat{\mu}(t\, k)\widehat{f}(k)e^{ik\cdot x},\; f\in L^{p}(\mathbb{T}^{n}).
\]
Here $\{\widehat{f}(k)\}_{k\in\mathbb{Z}^{n}}$ is the sequence of
Fourier coefficients of $f$ and the series is convergent in $L^{p}-$norm.
Given this set of ideas, the following variation of Theorem \ref{thm:gen-estimates}
is apparent.
\begin{proposition}
Let $n\geq2$, $\mu\in\mathcal{K}_{\sigma}(\mathbb{T}^{n})$, and
let $1\leq p\leq2$. Then there is a constant $c_{p}>0$ such that
for all $f\in L^{p}(\mathbb{T}^{n})$:~
\begin{itemize}
\item when $p=1$,
\[
\sup_{k\in\mathbb{Z}^{n}}\left[\min\{1,(t\,|k|)^{2\sigma}\}\vert\widehat{f}(k)\vert\right]\leq c_{1}\left\Vert M_{\mu}^{t}f-f\right\Vert _{1};
\]

\item when $1<p<2$,
\[
\left[\sum_{k\in\mathbb{Z}^{n}}\min\{1,(t\vert k\vert)^{2\sigma p'}\vert\widehat{f}(k)\vert^{p'}\right]^{1/p'}\leq c_{p}\left\Vert M_{\mu}^{t}f-f\right\Vert _{p};
\]

\item when $p=2$,
\[
\left[\sum_{k\in\mathbb{Z}^{n}}\min\{1,(t\vert k\vert)^{4\sigma}\vert\widehat{f}(k)\vert^{2}\right]^{1/2}\asymp\left\Vert M_{\mu}^{t}f-f\right\Vert _{2}.
\]

\end{itemize}
\end{proposition}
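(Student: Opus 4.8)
The plan is to transcribe the proof of Theorem \ref{thm:gen-estimates} to the torus, replacing the Euclidean Fourier transform by Fourier series and the Euclidean Hausdorff--Young inequality by its toroidal analogue. The one structural point to check is that the transference construction preserves the defining estimate (\ref{eq:meas-estimate}). By construction, a measure $\mu\in\mathcal{K}_\sigma(\mathbb{T}^n)$ arises from a continuous Euclidean multiplier $\widehat{\mu}\colon\mathbb{R}^n\to\mathbb{C}$ satisfying $|1-\widehat{\mu}(\xi)|\asymp\min\{1,|\xi|^{2\sigma}\}$, and the operator $M_\mu^t$ on $L^p(\mathbb{T}^n)$ acts on the Fourier side by the multiplier sequence $\{\widehat{\mu}(tk)\}_{k\in\mathbb{Z}^n}$ --- the restriction to the dilated lattice $t\mathbb{Z}^n$ of that same function. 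Hence
\[
\bigl|1-\widehat{\mu}(tk)\bigr|\asymp\min\{1,(t|k|)^{2\sigma}\},\qquad k\in\mathbb{Z}^n,\ t>0,
\]
with the very same constants as in (\ref{eq:meas-estimate}); evaluating a function on a sublattice changes nothing.

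With this in hand I would first record the multiplier identity on $\mathbb{T}^n$: since $\widehat{M_\mu^t f}(k)=\widehat{\mu}(tk)\,\widehat{f}(k)$, we have $\widehat{(M_\mu^t f - f)}(k)=\bigl[\widehat{\mu}(tk)-1\bigr]\widehat{f}(k)$ for all $k\in\mathbb{Z}^n$. In the range $1<p<2$, applying the Hausdorff--Young inequality on $\mathbb{T}^n$ (with normalized Haar measure the constant may be taken to be $1$) to the function $M_\mu^t f - f\in L^p(\mathbb{T}^n)$ gives
\[
\left[\sum_{k\in\mathbb{Z}^n}\bigl|\widehat{\mu}(tk)-1\bigr|^{p'}\,|\widehat{f}(k)|^{p'}\right]^{1/p'}\le\bigl\|M_\mu^t f - f\bigr\|_{L^p(\mathbb{T}^n)},
\]
and inserting the lower half of the displayed estimate for $|1-\widehat{\mu}(tk)|$ on the left produces the asserted inequality, with $c_p$ absorbing the implied constant. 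The case $p=1$ is identical, using $\sup_{k}|\widehat{g}(k)|\le\|g\|_{L^1(\mathbb{T}^n)}$ in place of Hausdorff--Young, and the case $p=2$ uses Parseval's identity, which --- because (\ref{eq:meas-estimate}) is two-sided --- yields the full equivalence $\asymp$.

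I do not anticipate a genuine obstacle. The only nonroutine ingredient, namely that $\{\widehat{\mu}(tk)\}$ is a bounded Fourier multiplier on $L^p(\mathbb{T}^n)$ for every $1\le p\le2$ (so that $M_\mu^t f - f$ makes sense and lies in $L^p(\mathbb{T}^n)$), is exactly the transference statement quoted from \cite[Chapter 3]{Graf} in the discussion preceding the proposition, the Euclidean multiplier $\widehat{\mu}(t\,\cdot)$ being the Fourier transform of a finite measure and hence an $L^p(\mathbb{R}^n)$ multiplier. Once that is granted, the proof is a verbatim copy of Theorem \ref{thm:gen-estimates} with integrals replaced by sums over $\mathbb{Z}^n$; the step deserving the most care is simply the observation recorded in the first paragraph that the estimate on $1-\widehat{\mu}$ survives restriction to the lattice.
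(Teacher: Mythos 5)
Your proposal is correct and follows exactly the route the paper intends: the paper states the result is ``apparent'' from the transference setup together with the proof of Theorem \ref{thm:gen-estimates}, and you have simply filled in those details --- the multiplier identity on $\mathbb{Z}^{n}$, the observation that the two-sided estimate on $1-\widehat{\mu}$ survives restriction to the lattice $t\mathbb{Z}^{n}$, and the substitution of the toroidal Hausdorff--Young inequality, the $L^{1}$--$\ell^{\infty}$ bound, and Parseval's identity for their Euclidean counterparts. No gaps.
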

A version of this result may also be made in the vein of Theorem \ref{thm:picks}
using Pick's inequality on $\mathbb{T}^{n}$; this generalizes a result
in \cite{GT} on the torus.

For $k\in\mathbb{Z}^{n}$, let $\Vert k\Vert=\max_{1\leq j\leq n}k_{j}$
be the maximum norm. Due to the equivalence of norms on finite dimensional
spaces, all of the estimates in the proposition above can be rewritten
in terms of this norm. This fact will be useful in the proof of the
following corollary generalizing Corollary \ref{cor:szasz} and in
the vein of Proposition \ref{prop:integrab result}.
\begin{corollary}
\label{cor:ae FS}Let $n\geq2$, let $\mu\in\mathcal{K}_{\sigma}(\mathbb{T}^{n})$
for some $\sigma>0$, and let $1\leq p\leq2$. If $f\in L^{p}(\mathbb{T}^{n})$
such that for some $0<\alpha\leq2\sigma$,
\[
\left\Vert M_{\sigma}^{t}f-f\right\Vert _{L^{p}(\mathbb{T}^{n})}=O(t^{\alpha}),
\]
then $\sum_{k}\vert\widehat{f}(k)\vert^{\beta}$ converges for $\frac{np}{np+\alpha p-n}<\beta\leq p'$.
Moreover, if $2\sigma>\frac{n}{p}$ and the Lipschitz condition holds
for some $\alpha>\frac{n}{p}$, then $\sum_{k\in\mathbb{Z}^{n}}\vert\widehat{f}(k)\vert$
converges, the Fourier series of $f$ converges absolutely and uniformly.
\end{corollary}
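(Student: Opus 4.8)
The plan is to mimic the proof of Proposition~\ref{prop:integrab result}, replacing the integral $\int_1^\Lambda(\cdots)\lambda^{n-1}\,d\lambda$ by a dyadic sum over lattice points $k\in\mathbb{Z}^n$. First I would record the torus analog of the ``small $|\xi|$'' estimate: from the $1<p\le2$ case of the preceding proposition (rewritten with the maximum norm $\Vert k\Vert$, using equivalence of norms), together with the hypothesis $\Vert M_\mu^t f-f\Vert_p=O(t^\alpha)$ and the choice $t=1/\Lambda$, one gets
\[
\sum_{1\le\Vert k\Vert<\Lambda}\Vert k\Vert^{2\sigma p'}\vert\widehat f(k)\vert^{p'}\le C\,\Lambda^{(2\sigma-\alpha)p'}.
\]
(The case $p=1$ is handled with $\Vert\widehat f\Vert_\infty$ in place of the sum.) Then, for $\beta<p'$, set $\phi(\Lambda)=\sum_{1\le\Vert k\Vert<\Lambda}\Vert k\Vert^{2\sigma\beta}\vert\widehat f(k)\vert^\beta$ and split the summand as $\Vert k\Vert^{2\sigma\beta}\vert\widehat f(k)\vert^\beta = \bigl(\Vert k\Vert^{2\sigma p'}\vert\widehat f(k)\vert^{p'}\bigr)^{\beta/p'}\cdot 1^{1-\beta/p'}$ and apply H\"older's inequality with exponents $p'/\beta$ and $(p'/\beta)'$. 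The second factor is $\bigl(\#\{k:1\le\Vert k\Vert<\Lambda\}\bigr)^{1-\beta/p'}\le C\,\Lambda^{n(1-\beta/p')}$, since the number of lattice points in a cube of side $\sim\Lambda$ is $O(\Lambda^n)$ — this is exactly the discrete substitute for $\int_1^\Lambda\lambda^{n-1}\,d\lambda=O(\Lambda^n)$. Combining, $\phi(\Lambda)=O(\Lambda^{2\sigma\beta-\alpha\beta+n-n\beta/p'})$.

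Next I would convert the bound on $\phi$ into a bound on $\sum_{\Vert k\Vert<\Lambda}\vert\widehat f(k)\vert^\beta$ by summation by parts (Abel summation) against the weight $\Vert k\Vert^{-2\sigma\beta}$, the discrete analog of the integration-by-parts step in Proposition~\ref{prop:integrab result}. Organizing the lattice points by the annuli $2^{j}\le\Vert k\Vert<2^{j+1}$, one writes $\sum_{\Vert k\Vert<\Lambda}\vert\widehat f(k)\vert^\beta=\sum_j 2^{-2\sigma\beta j}\cdot\bigl(\phi(2^{j+1})-\phi(2^j)\bigr)\cdot O(1)$ and resums, picking up $\Lambda^{-2\sigma\beta}\phi(\Lambda)+C\sum_{2^j<\Lambda}2^{-2\sigma\beta j}\phi(2^j)$. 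Plugging in the estimate for $\phi$, the geometric sum $\sum_{2^j<\Lambda}2^{(-\alpha\beta+n-n\beta/p')j}$ is $O(1)$ when the exponent is negative and $O(\Lambda^{n-\alpha\beta-n\beta/p'})$ otherwise; rewriting $n\beta/p'=n\beta-n\beta/p$, one concludes
\[
\sum_{\Vert k\Vert<\Lambda}\vert\widehat f(k)\vert^\beta = O\bigl(1+\Lambda^{\,n-\alpha\beta-n\beta+n\beta/p}\bigr),
\]
so the full sum $\sum_{k\in\mathbb{Z}^n}\vert\widehat f(k)\vert^\beta$ converges precisely when $n-\alpha\beta-n\beta+n\beta/p<0$, i.e. when $\beta>\frac{np}{np+\alpha p-n}$; together with $\beta\le p'$ from the restriction $\beta<p'$ (and the trivial endpoint $\beta=p'$ handled directly by the Hausdorff--Young-type estimate in the proposition) this gives the stated range. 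For the ``moreover'' part, when $2\sigma>\frac np$ and $\alpha>\frac np$ the lower endpoint $\frac{np}{np+\alpha p-n}$ falls below $1$, so we may take $\beta=1$: then $\sum_k\vert\widehat f(k)\vert<\infty$, hence the Fourier series converges absolutely and uniformly on $\mathbb{T}^n$ (and the footnote's remark about inversion/continuity applies).

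The main obstacle, and the only place where genuine care is needed, is the bookkeeping of the lattice-point count and of the weights $\Vert k\Vert^{s}$ across dyadic annuli: one must check that $\#\{k:2^j\le\Vert k\Vert<2^{j+1}\}=O(2^{jn})$ and that on such an annulus $\Vert k\Vert^{s}\asymp 2^{js}$, so that every sum is comparable to the corresponding integral in Proposition~\ref{prop:integrab result} up to absolute constants. With the maximum norm $\Vert k\Vert$ this is transparent (the annulus is a difference of two axis-parallel cubes), which is why that norm is singled out in the paragraph preceding the corollary; the equivalence of norms then transfers the $\min\{1,(t\vert k\vert)^{2\sigma p'}\}$ weights in the proposition to the maximum-norm versions at the cost of harmless constants. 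Aside from this, the argument is a line-by-line transcription of the Euclidean proof, with $\int\cdots\lambda^{n-1}d\lambda\mapsto\sum_{k}\cdots$ and integration by parts $\mapsto$ Abel summation.
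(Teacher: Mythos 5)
Your proposal is correct and follows essentially the same route as the paper: the weighted $\ell^{p'}$ bound from the torus proposition, H\"older's inequality against the lattice-point count $O(\Lambda^{n})$, and Abel summation against the weight $\Vert k\Vert^{-2\sigma\beta}$, with the endpoint and the ``moreover'' part handled identically. The only (immaterial) difference is that you organize the summation by parts over dyadic annuli $2^{j}\le\Vert k\Vert<2^{j+1}$, whereas the paper sums over the unit shells $\Vert k\Vert=l$.
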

\begin{proof}
The proof is similar to that of Proposition \ref{prop:integrab result}
as follows. We consider the case $1<p\leq2$, and take $N\in\mathbb{N}$.
Then the estimate in the proposition above implies
\[
\sum_{\Vert k\Vert\leq N}\Vert k\Vert^{2\sigma p'}\vert\widehat{f}(k)\vert^{p'}\leq C\, N^{(2\sigma-\alpha)p'},
\]
for some constant $C>0$. For $0<\beta<p'$, this implies via H\={o}lder's
inequality
\[
\sum_{\Vert k\Vert\leq N}\Vert k\Vert^{2\sigma\beta}\vert\widehat{f}(k)\vert^{\beta}=O\left(N^{2\sigma\beta+n-\alpha\beta-\frac{n\beta}{p'}}\right).
\]
Let 
\[
\phi(N)=\sum_{1\leq\Vert k\Vert\leq N}\Vert k\Vert^{2\sigma\beta}\vert\widehat{f}(k)\vert^{\beta}=\sum_{l=1}^{N}l^{2\sigma\beta}\sum_{\Vert k\Vert=l}\vert\widehat{f}(k)\vert^{\beta},
\]
and $\phi(0)=0$, then applying summation by parts
\begin{eqnarray*}
\sum_{1\leq\Vert k\Vert\leq N}\vert\widehat{f}(k)\vert^{\beta} & = & \sum_{l=1}^{N}l^{-2\sigma\beta}(\phi(l)-\phi(l-1))\\
 & = & \sum_{l=1}^{N-1}\left(l^{-2\sigma\beta}-(l+1)^{-2\sigma\beta}\right)\phi(l)+N^{-2\sigma\beta}\phi(N)\\
 & = & I_{1}(N)+I_{2}(N).
\end{eqnarray*}
The second piece is easily estimated using the estimate on $\phi$
to obtain $I_{2}(N)=O\left(N^{n-\alpha\beta-\frac{n\beta}{p'}}\right).$
For the first piece, we use the estimate on $\phi$ and simple estimates:
\begin{align*}
I_{1}(N) & \leq C\,\sum_{l=1}^{N}l^{-2\sigma\beta}\left(1-\left(1+\frac{1}{l}\right)^{-2\sigma\beta}\right)l^{2\sigma\beta+n-\alpha\beta-\frac{n\beta}{p'}}\\
 & \leq C\,\sum_{l=1}^{N}l^{n-\alpha\beta-\frac{n\beta}{p'}-1}\\
 & =O\left(1+N^{n-\alpha\beta-\frac{n\beta}{p'}}\right)
\end{align*}
Putting the estimates together we have
\[
\sum_{1\leq\Vert k\Vert\leq N}\vert\widehat{f}(k)\vert^{\beta}=O\left(1+N^{n-\alpha\beta-n\beta+\frac{n\beta}{p}}\right),
\]
and the result follows.
\end{proof}
Applying transference to the measures from Propositions \ref{prop:binomial powers}
and \ref{prop:DD} for $2l>\frac{n}{p}$ yields explicit realizations
for the second conclusion of this result.


\begin{thebibliography}{10}
\bibitem{BH}J.J. Benedetto and J.D. Lakey, \emph{The definition of
the Fourier transform for weighted inequalities,} Jour. Func. Anal.
120 (2), p403-439 (1994).

\bibitem{BP1}W.O.Bray and M.A.Pinsky, \emph{Growth properties of
Fourier transforms via moduli of continuity,} Jour. Func. Anal., 255,
p2265-2285 (2009).

\bibitem{BP2}W.O. Bray and M.A. Pinsky, \emph{Growth properties of
the Fourier transform, }Filomat 26:4, University of Nis, http://www.pmf.ni.ac.rs/pmf/publikacije/filomat/filomat\_pocetna.php,
p755-760 (2012); originally posted at arXiv:0910.1115v1 (2009).

\bibitem{DD}F. Dai and Z. Ditzian, \emph{Combinations of multivariate
averages,} Jour. Approx. Theory 131, p268-283 (2004).

\bibitem{D1}Z. Ditzian, \emph{Smoothness of a function and the growth
of its Fourier transform or its Fourier coefficients,} Jour. Approx.
Theory 162, p980-986 (2010).

\bibitem{D2}Z. Ditzian, \emph{Relating smoothness to expressions
involving Fourier coefficients or to a Fourier transform,} Jour. Approx.
Theory 164, p1369-1389 (2012).

\bibitem{DI}Z. Ditzian and K.G. Ivanov, \emph{Strong converse inequalities,}
Jour. d'Analyse Math. 61, p61-111 (1991).

\bibitem{G}D. Gioev, \emph{Moduli of continuity and average decay
of Fourier transforms: Two sided estimates,} Cont. Math. 458, Amer.
Math. Soc. p377-392 (2008).

\bibitem{GT}D. Gorbachev and S. Tikhonov, \emph{Moduli of smoothness
and growth properties of Fourier transforms: Two-sided estimates,}
Jour. Approx. Theory 164, p1283-1312 (2012).

\bibitem{Graf}L. Grafakos, \textbf{Classical Fourier Analysis,} 2nd
edition, Graduate Texts in Math 249, Springer-Verlag (2008).

\bibitem{Pinsky1}M.A. Pinsky, \emph{Pointwise Fourier inversion and
related eigenfunction expansions,} Comm. Pure Appl. Math. 47, p653-681
(1994).

\bibitem{Pinsky2}M.A. Pinsky and M.E. Taylor, \emph{Pointwise Fourier
inversion: a wave equation approach,} Jour. Fourier Anal. Appl. 3,
No. 6, p647-703 (1997).

\bibitem{P}S.S. Platonov, \emph{The Fourier transform of functions
satisfying the Lipschitz condition on rank one symmetric spaces,}
Siberian Math. Jour. 45, no. 6, p1108-1118 (2005).

\bibitem{S1}E. Stein, \emph{Maximal functions: spherical means,}
Proc. natl. Acad. Sci. 73 (7), p2174-2175 (1976).

\bibitem{T}E.C. Titchmarsh, \textbf{Introduction to the Theory of
Fourier Integrals}, Oxford University Press (1937).\end{thebibliography}
\end{document}